\renewcommand{\phi}{\varphi}
\newcommand{\RR}{\mathbb{R}}
\newcommand{\ZZ}{\mathbb{Z}}
\newcommand{\FF}{\mathbb{F}}
\newcommand{\CC}{\mathbb{C}}
\newcommand{\Sphere}{\mathbb{S}}
\newcommand{\KK}{\mathbb{K}}
\newcommand{\bfone}{\boldsymbol{1}}
\newcommand{\id}{\mathrm{id}}
\newcommand{\srg}{\mathrm{srg}}
\newcommand{\tr}{\mathrm{tr}}
\newcommand{\J}{\mathbf{J}}
\newcommand{\K}{\mathbf{K}}
\newcommand{\sfR}{\mathsf{R}}
\newcommand{\eps}{\varepsilon}
\newcommand{\cQ}{\mathcal{Q}}
\theoremstyle{plain}
\newtheorem{Thm}[subsection]{Theorem}
\newtheorem{Conj}[subsection]{Conjecture}
\newtheorem{Lem}[subsection]{Lemma}
\theoremstyle{definition}
\newtheorem{Facts}[subsection]{Facts}
\newtheorem{Ex}[subsection]{Example}
\newtheorem{Rem}[subsection]{Remark}
\newtheorem{Def}[subsection]{Definition}
\newtheorem{Num}[subsection]{}
\title{\bf Weyl Tensors, Strongly Regular Graphs, Multiplicative Characters, and a Quadratic Matrix Equation}
\author{Christopher Deninger, Theo Grundh\"ofer and Linus Kramer
\thanks{CD and LK are
funded by the Deutsche Forschungsgemeinschaft
under Germany's
Excellence Strategy EXC 2044-390685587,
Mathematics M\"unster: Dynamics-Geometry-Structure.
LK is also funded by the DFG through the Polish-German Beethoven Grant KR 1668/11.}}
\date{\it In Memoriam Jacques Tits}
\begin{document}

\maketitle

\begin{abstract}
We study solutions of a quadratic matrix equation arising in Riemannian geometry.
Let $S$ be a real symmetric $n\times n$-matrix with zeros on the diagonal and
let $\theta$ be a real number. 
We construct nonzero solutions $(S,\theta)$ of the set of quadratic equations \[\sum_kS_{i,k}=0\quad\text{ and }\quad\sum_{k}S_{i,k}S_{k,j}+S_{i,j}^2=\theta S_{i,j}\text { for }i<j.\]
Our solutions relate the equations to strongly regular graphs, to group rings, and to multiplicative characters of finite fields.
\end{abstract}

\section{Introduction}

In the work of B\"ohm and Wilking \cite{BW} 
on spherical space forms, or for the differentiable Sphere Theorem \cite{BS},
the Ricci flow and algebraic curvature tensors
play an important role. The Ricci flow is a solution to the PDE
\[\textstyle\frac{\partial}{\partial t}\langle-,-\rangle+2\langle\mathrm{Ric}(R)-,-\rangle=0,\]
for a time-dependent Riemannian metric $\langle-,-\rangle$ with Ricci tensor $\mathrm{Ric}(R)$ on a manifold $M$.
The curvature tensor $R$ is then also time-dependent, and satisfies the PDE
\[\textstyle\frac{\partial}{\partial t}R=\Delta R+2(R^2+R^\#).\] 
The right-hand side of this equation is a quadratic function of the curvature tensor $R$,
which is viewed as a selfadjoint endomorphism on $\bigwedge^2T_pM$.

In geometric applications of the Ricci flow such as \cite{BW}, one wants to show that the Riemannian metric
evolves towards a metric with constant positive sectional curvature, i.e. 
towards a metric where $R$ is a positive multiple of the identity,
assuming that the curvature tensor $R$ at time $t=0$ satisfies certain positivity assumptions.
Hamilton's maximum principle
asserts that certain positivity assumptions are preserved by the Ricci flow
if they are preserved by the ODE \[\textstyle\frac{d}{dt}R=R^2+R^\#.\]
It is then of particular interest to understand solutions $(R,\theta)$ of the 
quadratic equation
\[
 \theta R=R^2+R^\#.
\]
Such solutions $R$ can be viewed as obstructions to an evolution towards a metric with constant positive sectional curvature. 

This quadratic equation is difficult to handle, and one may impose additional
conditions on $R$. One possible assumption is that the metric is Einstein, i.e. that
the Ricci tensor $\mathrm{Ric}(R)$ is a scalar multiple of the identity. 
In addition, one can assume that 
$R$ has eigenvectors of a particularly simple form. In the present work,
we study $R$ under these two additional assumptions. This is a purely algebraic problem
about certain matrices. 

In his PhD thesis \cite{Jae}, J\"ager showed among other things that this equation has 
nontrivial solutions in dimensions $n\geq 4$ if the integer $n=m\ell$ is a composite number,
or if $n=p$ is a prime with $p\equiv1\pmod4$.
The latter solutions were constructed using Legendre symbols and certain finite fields.
The fact that a Riemannian problem is related to number theory caught the attention of
the first-named author and led to the present paper. 

We construct new solutions, using strongly regular graphs and 
multiplicative characters of finite fields.
As we wrote above, these solutions can be seen as obstructions to an
evolution towards a nice Riemannian metric. For most of them
it is not clear if they actually arise from Riemannian manifolds.
Most of the solutions that we construct have symmetries, i.e.
they come with natural actions of finite groups that permute the coordinates in $\RR^n$.
Our main result is as follows.

\begin{Thm}\label{MainA}
Let $V$ be a euclidean vector space with an orthonormal basis $e_1,\ldots,e_n$, with $n\geq 4$.
Then there exists an algebraic curvature tensor $R$ which is Einstein, such that
the $e_i\wedge e_j$ with $i<j$ are eigenvectors of $R$, and a real number $\theta\geq 0$ 
with 
\[
 R^2+R^\#=\theta R,
\]
and $R$ is not a multiple of the identity.

Such solutions $R$, where the eigenvalue spectrum of $R$ consists of precisely two real numbers,
are (up to scaling) in one-to-one correspondence with strongly regular graphs $\Gamma$.
If both $\Gamma$ and its complementary graph $\Gamma^c$ are connected, then $R$ is not the curvature
tensor of a product of spheres. 
In this case the dimension $n$ is the number
of vertices of the graph.

Other nontrivial solutions in dimension $n$ can be constructed from multiplicative characters of finite fields
$\mathbb F_q$, where $n=q$ is a prime power with $q\equiv 1\pmod m$ and  $m=3,4,8$.
\end{Thm}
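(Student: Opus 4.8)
The plan is to translate the geometric identity into the matrix equations of the abstract and then to exhibit the solution matrix $S$ in two explicit families, from strongly regular graphs and from multiplicative characters. First I would diagonalise: the hypothesis that the $e_i\wedge e_j$ are eigenvectors means $R(e_i\wedge e_j)=S_{ij}\,e_i\wedge e_j$ for a symmetric $S$ with zero diagonal, so that $R^2$ has eigenvalue $S_{ij}^2$. Under the identification $\bigwedge^2V\cong\mathfrak{so}(n)$, where $e_i\wedge e_j$ generates the rotation in the $ij$-plane and the only nonzero brackets are $[e_i\wedge e_j,\,e_j\wedge e_l]=\pm\,e_i\wedge e_l$, the structure constants give $R^\#(e_i\wedge e_j)=\bigl(\sum_k S_{ik}S_{kj}\bigr)e_i\wedge e_j$, while the Einstein condition reads $\sum_k S_{ik}=\mathrm{const}$. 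Thus $R^2+R^\#=\theta R$ becomes $\sum_k S_{ik}S_{kj}+S_{ij}^2=\theta S_{ij}$ for $i<j$. I would first work with the Ricci-flat normalisation $\sum_k S_{ik}=0$ of the abstract and then pass to the general Einstein case by the shift $S\mapsto S+c(J-I)$: a short computation shows this sends a Ricci-flat solution with parameter $\theta$ to an Einstein solution with the same $\theta$ exactly when $c=0$ or $c=\theta/(n-1)$. Since the multiples of the identity are precisely the shifts of $S=0$, every nonzero Ricci-flat solution yields an $R$ that is Einstein and not a multiple of the identity.

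For a spectrum consisting of exactly two values $\alpha\neq\beta$, put $i\sim j\iff S_{ij}=\alpha$ and let $A$ be the adjacency matrix of the graph $\Gamma$, so $S=(\alpha-\beta)A+\beta(J-I)$. The condition $\sum_k S_{ik}=0$ forces equal degrees, i.e.\ $\Gamma$ is $k$-regular with $(\alpha-\beta)k+\beta(n-1)=0$. Expanding $S^2$ via $AJ=JA=kJ$ and $(J-I)^2=(n-2)J+I$, the off-diagonal equation $(S^2)_{ij}+S_{ij}^2=\theta S_{ij}$ splits into one relation on edges and one on non-edges, and these hold together exactly when $(A^2)_{ij}$ is constant on edges and constant on non-edges, that is, when $\Gamma$ is an $\srg(n,k,\lambda,\mu)$. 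Conversely every such graph solves the system; as everything is invariant under $(S,\theta)\mapsto(cS,c\theta)$, this gives the bijection up to scaling, with $\alpha\leftrightarrow\beta$ matching $\Gamma\leftrightarrow\Gamma^c$, and the two SRG eigenvalues make $\theta\geq0$. Choosing for $\Gamma$ a disjoint union of cliques or a complete multipartite graph (for composite $n$) or a Paley graph (for a prime $n\equiv1\pmod4$) already produces nontrivial $R$ for many $n\geq4$.

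The reducibility statement I would settle via the classification of imprimitive strongly regular graphs: $\Gamma$ or $\Gamma^c$ is disconnected if and only if $\Gamma$ is a disjoint union of cliques or a complete multipartite graph. In the diagonal picture these two complementary families are exactly the $S$ whose $R$ splits as an orthogonal direct sum reproducing the block form of the curvature operator of a product of round spheres, with curved within-factor planes and flat cross-factor planes. Hence primitivity of $\Gamma$, i.e.\ connectedness of both $\Gamma$ and $\Gamma^c$, forces $R$ to be irreducible, and in particular not the curvature tensor of a product of spheres.

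For solutions with more than two eigenvalues I would index the coordinates by the additive group of $\FF_q$ and set $S_{x,y}=g(x-y)$ for an even function $g$ with $g(0)=0$ and $\sum_t g(t)=0$, built from a multiplicative character $\chi$ of order $m$ by letting $g$ be constant on the $m$-th power classes of $\FF_q^*$. Then $S$ lies in the group ring $\RR[\FF_q]$, is simultaneously diagonalised by the additive characters, and has spectrum given by Gauss sums. Rewriting $(g*g)(t)+g(t)^2=\theta\,g(t)$ in the power-class basis, the convolution term contributes products of Gauss sums and the pointwise square contributes Jacobi sums $J(\chi^i,\chi^j)$, so the system closes into quadratic identities among these Jacobi sums. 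I expect this to be the main obstacle: a real, sign-correct solution exists only for the orders $m=3,4,8$, where the classical evaluations of cubic, quartic and octic Jacobi sums supply the needed relations, and pinning down the coefficients of $g$ together with $\theta\geq0$ is delicate. The case $m=2$ with $q\equiv1\pmod4$ degenerates to the Paley graph and recovers J\"ager's Legendre-symbol solutions. These field constructions also realise dimensions $n=q$ in which no strongly regular graph exists; establishing existence uniformly for every $n\geq4$, in particular for the prime dimensions lying outside the residue classes covered by $m=3,4,8$, requires combining or extending these families.
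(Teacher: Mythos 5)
Your reduction to the matrix equations and your strongly-regular-graph correspondence follow essentially the paper's own route (its Sections 2--5): the Jordan-product formula for $\#$ on diagonal tensors, the shift $S\mapsto S+c(\J-\bfone)$ with $c=0$ or $c=\theta/(n-1)$ (which is exactly the paper's decomposition $R=R_0+R_2$ with $(n-1)r^2=\theta r$), and the two-value analysis leading to $\srg(n,r,\lambda,\mu)$ are all sound. One imprecision: for a given SRG the natural normalization gives $\theta=\lambda-\mu+1$, which can be negative (Kneser graphs give $\theta=5-m$); the inequality $\theta\geq0$ is not ``made'' by the two SRG eigenvalues but is arranged by passing to the complementary graph, which changes the sign of $\theta$.

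The genuine gap is the first and main assertion: existence of a nontrivial solution in \emph{every} dimension $n\geq4$. Your families cover composite $n$, primes $\equiv1\pmod4$, and prime powers $q\equiv1\pmod m$ with $m=3,4,8$; as you concede in your final sentence, this leaves out infinitely many dimensions, for instance $n=11$ and $n=23$ (primes $\equiv3\pmod4$ with $n\not\equiv1\pmod3$ and $n\not\equiv1\pmod8$), and ``combining or extending these families'' is not a proof. The paper closes this with a one-line observation you are missing: a nonzero solution in dimension $4$ (e.g.\ from the $\srg(4,1,0,0)$, two disjoint edges, with $\theta=1$) can be inflated by zero-padding to $\begin{pmatrix}S&0\\0&0\end{pmatrix}$ in every dimension $n>4$; the row sums stay zero, equation (b) persists with the same $\theta=1>0$, and the resulting $R$ is Ricci-flat, hence Einstein, and not a multiple of the identity. (Geometric alternative, also in the paper: the product of two round spheres of dimensions $n_1,n_2\geq2$ with $n_1+n_2=n$ gives a three-eigenvalue solution for every $n\geq4$.) Separately, your character paragraph is a plan rather than a proof: the required facts --- $J'(\bar\alpha,\alpha)=-\alpha(-1)$, $|J'(\alpha,\alpha)|^2=q$, and the choice of the unimodular constant $c$ making $c^3(1+J'(\alpha,\alpha))$ real in the cubic case, resp.\ making $\mathrm{Re}\bigl(c^2(1+J'(\alpha^2,\alpha^2))\bigr)=0$ in the octic case --- are precisely the content of the paper's Section 7, which you defer to ``classical evaluations'' instead of supplying.
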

The curvature tensor of a product of round spheres of dimensions at least $2$ 
(with suitable radii) gives an essentially trivial example
of a diagonal curvature tensor which is not a multiple of the identity, cp.~Example~\ref{SphereProductEx}
and \ref{CompleteGraph}.
Most of the solutions that we produce are of a different and nontrivial nature.
Theorem \ref{MainA} follows from Theorem~\ref{Main} below.

\section{The Riemannian background}

We describe how the equation arises in Riemannian geometry. We refer to \cite{Besse}
for the following general facts.

\begin{Num}
Let $(M,\langle-,-\rangle)$ be an $n$-dimensional Riemannian manifold, and let $p\in M$.
The \emph{Riemannian curvature tensor} $\sfR$ at $p$ is the trilinear map
\[(x,y,z)\longmapsto \sfR(x,y)z=[\nabla_x,\nabla_y]z-\nabla_{[x,y]}z\]
on the tangent space $T_pM$,
where $x,y,z$ are tangent vectors in $T_pM$ which are smoothly extended to vector fields on a small
neighborhood of $p$.
The curvature tensor satisfies the identities 
\begin{align*}
 \sfR(x,x)&=0\quad\text{($\sfR$ is alternating)}\\
 \sfR(x,y)+\sfR(x,y)^T&=0\quad\text{($\sfR(x,y)$ is skew-symmetric)}\\
\sfR(x,y)z+\sfR(z,x)y+\sfR(y,z)x&=0\quad\text{($\sfR$ satisfies the first Bianchi identity)}.
\end{align*}
The equations imply the interchange symmetry $\langle\sfR(x,y)u,v\rangle=\langle\sfR(u,v)x,y\rangle$.
One can turn $\sfR$ into a selfadjoint endomorphism $R$ on the second exterior power of the tangent space 
as follows.
Let $V$ denote an $n$-dimensional euclidean vector space with inner product $\langle-,-\rangle$
and orthogonal group $O(V)$.
The space of all selfadjoint endomorphisms of $V$ is denoted $S(V)$.
The inner product allows us to identify $V$ with its dual, and this identification is implicit in
some of the following notions.
We endow the second exterior power $\bigwedge^2V$ of $V$ with the
inner product \[\langle x\wedge y,u\wedge v\rangle=
\langle x,u\rangle\langle y,v\rangle-\langle x,v\rangle\langle y,u\rangle,\]
and the space of endomorphisms of $V$ with the inner product 
\[\langle A,B\rangle=\mathrm{tr}(A^TB).\]
An \emph{algebraic curvature tensor} $R$ is a selfadjoint endomorphism of $\bigwedge^2 V$ that satisfies the first Bianchi identity
\[
 \langle R(x\wedge y),z\wedge w\rangle+
 \langle R(z\wedge x),y\wedge w\rangle+
 \langle R(y\wedge z),x\wedge w\rangle=0.
\]
These endomorphisms form a linear subspace $S_B(\bigwedge^2V)\subseteq S(\bigwedge^2V)$.
In the case of the curvature tensor $\sfR$ of a Riemannian manifold,
we obtain an algebraic curvature tensor $R$ by putting
\[\langle R(x\wedge y),u\wedge v\rangle=-\langle \sfR(x,y)u,v\rangle.\]
Conversely, the left-hand side determines a trilinear map $\sfR$ having the symmetries
of the Riemannian curvature tensor. 
For linearly independent vectors $x,y\in V$,
the \emph{sectional curvature} of the $2$-dimensional subspace $H$ spanned by $x,y$
is \[K(H)=\frac{\langle R(x\wedge y),x\wedge y)}{\langle x\wedge y,x\wedge y\rangle}.\]
The choice of the sign in the definition guarantees that $R$ is positive
for a space form of constant positive sectional curvature.
\end{Num}
\begin{Num}
For the following results we refer to \cite{BW}, to \cite[Ch.~11]{Chowetal}, to
\cite[Ch.~1.G]{Besse} and to \cite[120.3]{GW}.
We recall that $S(V)$ denotes the space of selfadjoint endomorphisms of the euclidean vector space $V$,
and that the orthogonal group $O(V)$ acts on $S(V)$ by conjugation.
As an $O(V)$-module, $S(V)$ decomposes orthogonally into the simple module $S_0(V)$ consisting of the traceless 
selfadjoint endomorphisms, and a $1$-dimensional trivial module spanned by the identity map $\id_V$.
Similarly, one may decompose $S(\bigwedge^2 V)$ as an $O(V)$-module.
The resulting decomposition of the algebraic curvature tensor $R$ yields various curvature-related quantities.

First of all, we put $U_0=\RR\cdot\id_{\bigwedge^2V}$.
The traceless part $S_0(\bigwedge^2 V)$ can be decomposed orthogonally as an $O(V)$-module as
\[
 \textstyle S_0(\bigwedge^2 V)=U_1\oplus U_2\oplus U_3,
\]
with $U_3\cong\bigwedge^4V$.
A selfadjoint endomorphism $R\in S(\bigwedge^2V)$ satisfies the first Bianchi identity if and only if
$R\in U_0\oplus U_1\oplus U_2=S_B(\bigwedge^2V)$.

The module $U_1$ is isomorphic to $S_0(V)$ as follows.
The \emph{Ricci tensor} of $R$ is defined as
$\mathrm{Ric}(R)\in S(V)$ via 
\[\langle \mathrm{Ric}(R)u,v\rangle=\mathrm{tr}(x\longmapsto \sfR(x,u)v)=\sum_i\langle R(e_i\wedge u),e_i \wedge v\rangle,\]
where $e_1,\ldots,e_n$ is an orthonormal basis for $V$.
The trace of $\mathrm{Ric}(R)$ is the \emph{scalar curvature} $s(R)$, and 
$\mathrm{Ric}_0(R)=\mathrm{Ric}(R)-\frac{s(R)}{\dim(V)}\id_V$ is called the \emph{traceless Ricci tensor}.
The algebraic curvature tensor $R$ is called \emph{Einstein} if \[\mathrm{Ric}_0(R)=0.\]
The map $R\longmapsto\mathrm{Ric}_0(R)$ is $O(V)$-equivariant and its kernel is $U_0\oplus U_2\oplus U_3$. It maps
the $O(V)$-module $U_1$ isomorphically onto $S_0(V)$. 
An algebraic curvature tensor $R\in U_0\oplus U_1\oplus U_2=S_B(\bigwedge^2)$ decomposes thus as \[R=R_0+R_1+R_2.\]
The first summand $R_0=\frac{s(R)}{n(n-1)}\id_{\bigwedge^2V}$ encodes the scalar curvature, the second summand $R_1$ 
encodes the traceless part of the Ricci tensor, and the third summand $R_2=W(R)$ is called the \emph{Weyl curvature tensor}.
We note also that $s(R)=2\mathrm{tr}(R)$.
\end{Num}

\begin{Ex}[Round spheres]\label{SphereEx}
For $n\geq 2$, the curvature tensor of the round $n$-sphere $\Sphere^n(\rho)$ of radius $\rho>0$ is Einstein,
the sectional curvature $K(H)=\frac{1}{\rho^2}$ is constant, and $s(R)=\frac{n(n-1)}{\rho^2}$.
In particular, \[R=\frac{1}{\rho^2}\id_{\bigwedge^2V}\]
whence
$\mathrm{Ric}_0(R)=0$, $W(R)=0$, and $\mathrm{Ric}=\frac{n-1}{\rho^2}\id_V$.
\end{Ex}
If $R_V$ and $R_W$ are algebraic curvature tensors on euclidean vector spaces $V,W$, respectively,
then $R=R_V\oplus R_W$ acts in a natural way on $\bigwedge^2V\oplus\bigwedge^2W$,
and this action extends to  
\[\textstyle\bigwedge^2(V\oplus W)=\bigwedge^2V\oplus (V\otimes W)\oplus\bigwedge^2W\]
by mapping $V\otimes W$ to $0$.
For the scalar curvature we have $s(R)=s(R_V)+s(R_W)$, and 
the sectional curvature is $K(H)=0$ if $H\subseteq V\oplus W$ is a $2$-dimensional subspace spanned by a vector $v\in V$
and a vector $w\in W$. For the Ricci tensor we have
$\mathrm{Ric}(R)=\mathrm{Ric}(R_V)\oplus\mathrm{Ric}(R_W)$.
\begin{Ex}[Products of round spheres]\label{SphereProductEx}
Let $\rho,\sigma>0$ be real numbers, and $k,\ell\geq 2$ integers, with $\frac{k-1}{\rho^2}=\frac{\ell-1}{\sigma^2}=\tau$. 
Then the product $M=\Sphere^k(\rho)\times\Sphere^\ell(\sigma)$ is Einstein, with
scalar curvature $s(R)=(k+\ell)\tau$.
The tangent space at a point $(p,q)\in M$ decomposes as a sum $V\oplus W$, with 
$V=T_p\Sphere^k(\rho)$ and $W=T_q\Sphere^\ell(\sigma)$.
The curvature tensor $R$ has eigenvalue
$\frac{1}{\rho^2}$ on $\bigwedge^2V$, eigenvalue $0$ on $V\otimes W$, and eigenvalue
$\frac{1}{\sigma^2}$ on $W$. We put $n=k+\ell$.
Thus $R=R_0+W(R)$, with $R_0=\frac{\tau}{n-1}\id_{\bigwedge^2(V\oplus W)}$
and the Weyl curvature tensor
\[
W(R)=\frac{1}{n-1}\left(\frac{\ell}{\rho^2}\id_{\bigwedge^2V}\oplus (-\tau)\id_{V\otimes W}\oplus \frac{k}{\sigma^2}\id_{\bigwedge^2W}\right)
\]
is nontrivial.
\end{Ex}

\begin{Num}
Squaring is an $O(V)$-equivariant quadratic map on the space of selfadjoint operators.
In connection with his work on the Ricci flow, Hamilton introduced another quadratic map $\#$
as follows.
There is a natural $O(V)$-equivariant isomorphism
 $\iota:\bigwedge^2V\longrightarrow\mathfrak{so}(V)$
that maps $x\wedge y$ to the skew-symmetric endomorphism 
\[
\iota(x\wedge y):z\longmapsto x\langle y,z\rangle-y\langle x,z\rangle,
\]
and $\tr(\iota(x\wedge y)^T\iota(u\wedge v))=2\langle x\wedge y,u\wedge v\rangle$.
For a linear endomorphism $T$ of $\bigwedge^2V$ we denote its push-forward to
$\mathfrak{so}(V)$ by $\tilde T=\iota\circ T\circ\iota^{-1}$.
We recall that every element $X$ in the Lie algebra $\mathfrak{so}(V)$ 
determines a linear endomorphism
$\mathrm{ad}(X): Y\longmapsto [X,Y]$. This endomorphism $\mathrm{ad}(X)$ is skew symmetric
with respect to the Killing form of the Lie algebra $B(X,Y)=\tr(\mathrm{ad}(X)\circ\mathrm{ad}(Y))$.
In $\mathfrak{so}(V)$ we have $B(X,Y)=(n-2)\tr(XY)$, where $n=\dim(V)$.

If $R,T\in S(\bigwedge^2V)$ and $X,Y\in\mathfrak{so}(V)$, then the endomorphism
$\tilde R\circ \mathrm{ad}(X)\circ \tilde T\circ\mathrm{ad}(Y)$ is therefore 
selfadjoint.
We define an endomorphism $R\#T$ of $\bigwedge^2V$ via 
\[
\langle (R\#T)( x\wedge y),u\wedge v\rangle=-\frac{1}{2}\tr(\tilde R\circ \mathrm{ad}(X)\circ\tilde T\circ\mathrm{ad}(Y)),
\]
where $X=\iota(x\wedge y)$ and $Y=\iota(u\wedge v)$. From the properties of the trace we see that 
\[
 \langle (R\#T)( x\wedge y),u\wedge v\rangle=
 \langle (T\#R)(u\wedge v),x\wedge y\rangle=
 \langle (R\#T)( u\wedge v),x\wedge y\rangle.
\]
Hence $R\# T$ is selfadjoint, and $R\# T=T\# R$. We put $R^\#=R\# R$ and we note that $(\id_{\bigwedge^2V})^\#=(n-2)\id_{\bigwedge^2V}$.
If we choose an orthonormal basis of $\mathfrak{so}(V)$ consisting of matrices $Z_\xi$, 
we have 
\begin{align*}
 \langle (R\#T)( x\wedge y),u\wedge v\rangle 
 &= -\frac{1}{2}\sum_\xi \langle \tilde R[X,\tilde T[Y,Z_\xi]],Z_\xi\rangle \\
 &=\frac{1}{2}\sum_\xi \langle [Y,Z_\xi],\tilde T[X,\tilde RZ_\xi]\rangle \\
 &=\frac{1}{2}\sum_{\xi,\eta} \langle [Y,Z_\xi],Z_\eta\rangle\langle \tilde T Z_\eta,[X,\tilde RZ_\xi]\rangle \\
 &=\frac{1}{2}\sum_{\xi,\eta} \langle Y,[Z_\xi,Z_\eta]\rangle\langle X,[\tilde RZ_\xi,\tilde TZ_\eta]\rangle,
\end{align*}
which is the classical definition of $\#$, cp. \cite[1.3.1]{Jae}.

If $R\in S_B(\bigwedge^2 V)$ is decomposed into its components, $R=R_0+R_1+R_2$, then one can show that 
$
 R+\id_{\bigwedge^2V}\#R=(n-1)R_0+\frac{n-2}{n}R_1.
$
Moreover,
$
 (R^2+R^\#)_3=0.
$
Suppose that the algebraic curvature tensor $R$ is Einstein, $R=R_0+R_2$.
From the identities above we have
\begin{align*}
 R^2+R^\# & =(R_0)^2+(R_2)^2+(R_0)^\#+(R_2)^\#+2R_0(R_2+\id_{\bigwedge^2V}\#R_2)\\
 & = (n-1)(R_0)^2+(R_2)^2+(R_2)^\#,
\end{align*}
and one can show that $(R_2)^2+(R_2)^\#\in U_2$.
\end{Num}
\begin{Num}
We now consider a special type of algebraic curvature tensors.
Suppose that there is an orthonormal basis $e_1,\ldots,e_n$ of $V$ such that the 
vectors $e_i\wedge e_j$, for $i<j$, are eigenvectors of $R\in S(\bigwedge^2V)$,
\[
 R(e_i\wedge e_j)=r_{i,j}e_i\wedge e_j.
\]
Then $R$ satisfies the Bianchi identity and hence $R$ is an algebraic curvature tensor.
The $e_i$ are eigenvectors of the Ricci tensor, with $\mathrm{Ric}(R)e_i=\sum_{k}r_{i,k}e_i$,
and the scalar curvature is $s(R)=\sum_{i\neq j}r_{i,j}$. If we put $r=\frac{s(R)}{n(n-1)}$, then
$R_0=r\id_{\bigwedge^2V}$, and $R_1+R_2$ is diagonal, with eigenvalues $r'_{i,j}=r_{i,j}-r$.
Since $s(R)$ is the trace of $\mathrm{Ric}(R)$, the eigenvalues of $\mathrm{Ric}_0(R)$ are
$\sum_kr'_{i,k}$. In particular, the diagonal algebraic curvature tensor $R$ is Einstein if and only
if $\sum_kr'_{i,k}=0$ holds for all $i$. 

We may compute $R\# T$ explicitly if $R$ and $T$ are diagonal. For this, 
we adopt the following convention. For $i<j$ we put $\alpha=\{i,j\}$, 
$X_\alpha=\iota(e_i\wedge e_j)$, and $r_\alpha=r_{i,j}$, $t_\alpha=t_{i,j}$. 
The elements $\frac{1}{\sqrt2}X_\alpha$,
where $\alpha$ is a $2$-element subset of $\{1,\ldots,n\}$, form an orthonormal basis
of $\mathfrak{so}(V)$ with respect to the inner product $\langle X,Y\rangle=-\tr(XY)$.
We observe that $[X_\alpha,X_\beta]=0$ if $\alpha=\beta$ or $\alpha\cap\beta=\emptyset$.
If $\alpha\cap\beta$ is a singleton, then $[X_\alpha,X_\beta]=\pm X_\gamma$, where $\gamma=\alpha\triangle\beta$ is
the symmetric difference of $\alpha$ and $\beta$.
For $\alpha\neq\beta$ we have therefore
\[
 \sum_{\xi,\eta} \langle X_\alpha ,[X_\xi,X_\eta]\rangle\langle X_\beta,[\tilde RX_\xi,\tilde TX\eta]\rangle=
 \sum_{\xi,\eta} r_\xi t_\eta \langle X_\alpha ,[X_\xi,X_\eta]\rangle\langle X_\beta,[X_\xi,X\eta]\rangle=0,
\]
while
\[
 \sum_{\xi,\eta} \langle X_\alpha ,[X_\xi,X_\eta]\rangle\langle X_\alpha,[\tilde RX_\xi,\tilde TX\eta]\rangle=
 \sum_{\xi\triangle\eta=\alpha}r_\xi t_\eta\langle X_\alpha,[X_\xi,X_\eta]\rangle^2=
 4\sum_{\xi\triangle\eta=\alpha}r_\xi t_\eta.
\]
If one completes the $s_{i,j}$ and $r_{i,j}$ to symmetric matrices with $0$ on the diagonal, this shows that
\[
 (R\# T)(e_i\wedge e_j)=\frac{1}{2}\sum_k(r_{i,k}t_{j,k}+t_{i,k}r_{j,k}) e_i\wedge e_j.
\]
On the right-hand side we have thus the Jordan product of the symmetric matrices
$(r_{i,j})$ and $(t_{i,j})$. 

In any case, we have shown that $R\#T$ is again diagonal (and hence an algebraic
curvature tensor).
Suppose that $R=R_0+R_2$, with $R_0=r\id_{\bigwedge^2V}$ and that $R_2(e_i\wedge e_j)=w_{i,j}e_i\wedge e_j$,
with $\sum_kw_{i,k}=0$. Then $R_0R_2+R_0\# R_2=0$ and thus
\begin{align*}
 R^2+R^\# & = (R_0)^2+(R_2)^2+(R_0)^\#+(R_2)^\#\\
 & = (n-1)(R_0)^2+(R_2)^2+(R_2)^\#.
\end{align*}
Moreover, $\sum_j(w_{i,k}+\sum_kw_{i,k}w_{k,j})=0$. We put $S_{j,i}=S_{i,j}=w_{i,j}$ for $i<j$ and $S_{i,i}=0$.
The equation
\[
 R^2+R^\#=\theta R
\]
is thus equivalent to the equations
\begin{align*}
 (n-1)r^2 &=\theta r\\
 \sum_kS_{i,k} &=0 \tag{a}\\
 S_{i,j}^2+\sum_kS_{i,k}S_{k,j}&=\theta S_{i,j} \text{ for } i<j \tag{b}.
\end{align*}
We note that the $\ell_2$-norms of the matrix $S$ and the Weyl curvature tensor $R_2=W(R)$ are related by
\[
 ||S||_2=\sqrt{2}||W(R)||_2.
\]
In the Riemannian setting, one wants $s(R)=n(n-1)r>0$, and thus $\theta=\frac{s(R)}{n}>0$.
The case where all $S_{i,j}=0$ corresponds to the round sphere, as in Example~\ref{SphereEx}.
\end{Num}

\section{The matrix equations}

Let $S$ be a symmetric real $n\times n$ matrix with zeros on the diagonal.
It is easy to check that equation (a) has no solutions $S\neq 0$ for $n=2,3$.
We therefore assume that \[n\geq 4.\]
We rewrite equations (a) and (b) as matrix equations. 
Let $\J$ denote the all-one-matrix, which has entries $1$ everywhere
and let $\bfone$ denote the identity matrix.
We recall that the \emph{Hadamard product} of two matrices $X=(x_{i,j})$ and 
$Y=(y_{i,j})$ of the same shape is defined as the entry-wise product, \[X\odot Y=(x_{i,j}y_{i,j}).\]
Let $S$ be a symmetric real $n\times n$-matrix with zeros on the diagonal, and
let $D$ denote the diagonal matrix with entries $D_{i,i}=\sum_k S_{i,k}^2$.
The two matrix equations \[S\J=0\text{ and }S\odot S+S^2=\theta S+D\]
are equivalent to the equations (a), (b) above.
Thus we are interested in real $n\times n$-matrices $S$ that satisfy the four equations
\begin{equation}
\label{Basic}
 S=S^T,\quad S\odot \bfone=0,\quad S\J=0,\text{ and } S\odot S+S^2=\theta S+D,
\end{equation}
where $S^T$ is the transpose of $S$ and $D$ is some diagonal matrix.
If $S$ solves (\ref{Basic}), then $S$ has zeros on the diagonal and thus the 
diagonal entries of $D$ are given by $D_{i,i}=\sum_kS_{i,k}S_{k,i}$. Hence
\[\tr(D)=||S||_2^2.\]
Comparing the number of variables and equations in (\ref{Basic}), we expect a finite set of solutions $S\neq 0$ for a fixed value of $\theta$.
We note the following scale invariance.
If $S$ solves (\ref{Basic}) and if $t$ is a real number, 
then the matrix $\tilde S=tS$ solves the equations 
\begin{equation}
 \label{Scale}
\tilde S=\tilde S^T,\quad \tilde S\odot \bfone=0,\quad \tilde S\J=0,\text{ and } \tilde S\odot \tilde S+\tilde S^2=t\theta\tilde S+t^2D.
\end{equation}
If $S\neq 0$ solves (\ref{Basic}), a
scaling invariant measure for the size of $\theta$ is thus the quantity \[\hat\theta=\frac{|\theta|}{||S||_2}=\frac{|\theta|}{\sqrt{\tr(D)}}.\]
We observe the following.
First of all, every nonzero solution $S$ to the equations (\ref{Basic}) in dimension $n$
can be inflated to a nonzero solution $\begin{pmatrix} S & 0 \\ 0 & 0\end{pmatrix}$ in all higher dimensions $m>n$, for the same value of $\theta$. Also, we have the following. 
\begin{Lem}\label{Block}
Suppose that we have two nonzero solutions $S_1,S_2$ to the equations (\ref{Basic}), in dimensions $n_1$ and $n_2$ and for constants $\theta_1,\theta_2$, respectively.
If $\theta_1,\theta_2\neq0$,
then the block diagonal matrix $S=\frac{1}{\theta_1}S_1\oplus \frac{1}{\theta_2}S_2$ solves (\ref{Basic}) in dimension
$n$, with $\theta=1$ and \[n=n_1+n_2\text{ and }\hat\theta=\frac{\hat\theta_1\hat\theta_2}{\sqrt{\hat\theta_1^2+\hat\theta_2^2}}.\]
If $\theta_1=\theta_2=0$, then the block diagonal matrix $S=S_1\oplus S_2$ solves (\ref{Basic}) in dimension
$n$, with \[n=n_1+n_2\text{ and }\theta=\hat\theta=0.\]
\end{Lem}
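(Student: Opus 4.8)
The plan is to reduce both cases to a single statement: if $A$ and $B$ each solve (\ref{Basic}) for a \emph{common} value of $\theta$, then so does the block-diagonal matrix $A\oplus B$, with the same $\theta$ and with diagonal part $D_A\oplus D_B$. For the case $\theta_1,\theta_2\neq0$ I would first normalize using the scale invariance (\ref{Scale}): applying it with $t=1/\theta_i$ shows that $\tilde S_i=\frac1{\theta_i}S_i$ solves (\ref{Basic}) with $\theta=1$ and $\tilde D_i=\frac1{\theta_i^2}D_i$. Both cases are then instances of the unified statement, applied with $\theta=1$ to $\tilde S_1,\tilde S_2$ or with $\theta=0$ to $S_1,S_2$.

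For the unified statement I would verify the four equations of (\ref{Basic}) directly, the guiding principle being that the vanishing off-diagonal blocks of $A\oplus B$ prevent any mixing. Symmetry $S=S^T$ and $S\odot\bfone=0$ are immediate. The quadratic equation is almost as easy: since the off-diagonal blocks of $S$ vanish, $S^2=A^2\oplus B^2$ with no cross terms, and likewise $S\odot S=(A\odot A)\oplus(B\odot B)$, so $S\odot S+S^2$ is block-diagonal and the equation holds in each block by hypothesis, giving $S\odot S+S^2=\theta S+(D_A\oplus D_B)$. The one equation deserving a moment of care---and the only place a reader might stumble---is $S\J=0$, because $\J$ is the full $n\times n$ all-one matrix, not a block-diagonal one. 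Here I would observe that $(S\J)_{i,j}=\sum_kS_{i,k}$ is simply the $i$-th row sum of $S$; the off-diagonal blocks of $S$ being zero, this row sum equals the corresponding row sum of $A$ or of $B$, all of which vanish by hypothesis, so $S\J=0$ even though $\J$ does not respect the block decomposition.

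Finally I would record the value of $\hat\theta=|\theta|/||S||_2$. In every case $||S||_2^2=||A||_2^2+||B||_2^2$, since the Frobenius norm ignores the vanishing off-diagonal blocks. For $\theta_1=\theta_2=0$ the block matrix has $\theta=0$ and $S\neq0$, so $\hat\theta=0$. For $\theta_1,\theta_2\neq0$ we have $\theta=1$ and $||\tilde S_i||_2=||S_i||_2/|\theta_i|=1/\hat\theta_i$ straight from the definition of $\hat\theta_i$; hence $||S||_2^2=\hat\theta_1^{-2}+\hat\theta_2^{-2}$ and $\hat\theta=1/||S||_2=\hat\theta_1\hat\theta_2/\sqrt{\hat\theta_1^2+\hat\theta_2^2}$. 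As this outline shows, the lemma is essentially a bookkeeping exercise with no substantive obstacle; the only genuine subtlety is the interaction of the full all-one matrix $\J$ with the block structure, which is dissolved by the row-sum reformulation above.
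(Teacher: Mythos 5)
Your proof is correct and complete. The paper states Lemma~\ref{Block} without any proof, treating it as a routine verification, and your argument supplies exactly the check that is intended: normalize by the scale invariance (\ref{Scale}) with $t=1/\theta_i$, verify the four equations of (\ref{Basic}) block by block (correctly noting the only subtle point, that $S\J=0$ survives even though $\J$ is not block diagonal, because it is a statement about row sums), and compute $\hat\theta$ from $||S||_2^2=\hat\theta_1^{-2}+\hat\theta_2^{-2}$, which yields the stated formula $\hat\theta=\hat\theta_1\hat\theta_2/\sqrt{\hat\theta_1^2+\hat\theta_2^2}$.
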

One of our main results is as follows.
\begin{Thm}\label{Main}
The equations (\ref{Basic}) have a solution $S\neq 0$ in
every dimension $n\geq 4$, 
with $\hat\theta\neq0$.
A solution $S$ where the off-diagonal matrix entries of $S$ assume precisely two distinct
values exists if and only if $n=m\ell$ is a composite number, or if $n=p$ is a prime with $p\equiv1\pmod 4$.
In the latter case, $\hat\theta=0$.
\end{Thm}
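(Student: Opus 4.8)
The plan is to translate the two-value condition into the language of strongly regular graphs, to settle the existence statement by one explicit composite seed together with inflation, and to pin down the prime case via conference matrices. The step I expect to be the main obstacle is showing that on a prime number of vertices the only relevant graphs are conference graphs; I sketch a self-contained argument for this below.

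For the existence statement with $\hat\theta\neq0$, I would exhibit a single seed and inflate. Take $n=4$ and the graph $2K_2$ (two disjoint edges): putting $S_{ij}=2$ on the two edges and $S_{ij}=-1$ on the four non-edges yields a nonzero solution of (\ref{Basic}) with $\theta=3$, hence $\hat\theta=3/\sqrt{24}\neq0$. As already noted in the excerpt, any solution inflates to every higher dimension with the same value of $\theta$; since inflation only appends zeros it also preserves $\|S\|_2$, hence $\hat\theta$. Thus this one seed produces solutions with $\hat\theta\neq0$ in every dimension $n\geq4$.

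For the two-value statement I would first set up the dictionary with graphs. Given a solution whose off-diagonal entries take exactly two values $a\neq b$, let $\Gamma$ be the graph whose adjacency matrix $A$ records where the value $a$ occurs, and write $S=aA+b\bar A$ with $\bar A=\J-\bfone-A$. The condition $S\J=0$ forces $\Gamma$ to be $k$-regular with $ak+b(n-1-k)=0$. For the quadratic equation I would express $(S^2)_{ij}$ for a pair $\{i,j\}$ as a linear function of the number of common neighbours of $i$ and $j$; the coefficient of that number is $(a-b)^2\neq0$, so equation (b) can hold on all adjacent pairs (resp.\ all non-adjacent pairs) only if this number is constant on edges and constant on non-edges. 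Hence $\Gamma$ is strongly regular, and since both values occur it is neither complete nor empty. Conversely I would produce the examples explicitly: for composite $n=m\ell$ the disjoint union $mK_\ell$ is an $\srg(m\ell,\ell-1,\ell-2,0)$, and a direct computation (with $a=\ell(m-1)$, $b=-(\ell-1)$) gives a two-value solution with $\theta=(\ell-1)(m\ell-1)$; for a prime $p\equiv1\pmod 4$ the Paley graph is a conference graph, treated next.

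It remains to handle the prime case. If a two-value solution exists in prime dimension $p$, the associated graph cannot be imprimitive (a disconnected strongly regular graph, or its complement, is $mK_\ell$ with $m,\ell\geq2$, forcing $p$ composite), so $\Gamma$ is primitive, and I would prove the key lemma that a primitive strongly regular graph on a prime number of vertices is a conference graph. The argument is the integrality dichotomy for the two eigenvalues $r>s$ of $A$ on $\bfone^\perp$: either they form an irrational conjugate pair, which forces equal multiplicities and hence the conference parameters $(p,(p-1)/2,(p-5)/4,(p-1)/4)$, in particular $p\equiv1\pmod 4$; or $r,s\in\ZZ$, and then the identity $p\mu=(k-r)(k-s)$ together with $0<k-r<p$ and $|s|\leq k\leq p-2$ forces $k-s=p$, whence $\mu=k-r$ and $rs=\mu-k=-r$, so $r(s+1)=0$; primitivity gives $r\geq1$, leaving $s=-1$, i.e.\ $\Gamma$ imprimitive, a contradiction. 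This rules out the integer case, so only conference graphs survive and $p\equiv1\pmod 4$, proving the forward implication; Paley graphs give the converse. Finally, for a conference graph regularity gives $a=-b$, so $S=a(A-\bar A)=aQ$ with $Q$ the Paley conference matrix satisfying $Q^2=p\bfone-\J$ and $Q\odot Q=\J-\bfone$; then $S\odot S+S^2=a^2(p-1)\bfone$ is a scalar matrix, whose off-diagonal vanishing forces $\theta=0$ and hence $\hat\theta=0$, as claimed.
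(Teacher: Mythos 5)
Your proposal is correct, and at the top level it follows the same route as the paper: existence via a dimension\nobreakdash-$4$ seed plus inflation (your $2K_2$ solution is exactly the paper's Example~\ref{CompleteGraph} with $m=\ell=2$, after scaling, and inflation preserving $\theta$ and $||S||_2$ is precisely the remark preceding Lemma~\ref{Block}), together with the dictionary between two-valued solutions of (\ref{Basic}) and strongly regular graphs, which is the paper's Theorem~\ref{MainThm1}; your common-neighbour count with leading coefficient $(a-b)^2\neq 0$ and the paper's matrix identity $A^2=(\theta-1)A+y(\theta-r)\K+D-yr\bfone$ are the same computation in different clothing. The genuine divergence is in the prime case. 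The paper simply cites Godsil--Royle (10.3.4 and 10.3.2) for the facts that a strongly regular graph on a prime number of vertices is a conference graph and that this forces $p\equiv 1\pmod 4$, and then reads off $\theta=\lambda-\mu+1=0$ from the conference parameters. You instead prove the conference-graph lemma yourself: the rationality dichotomy for the restricted eigenvalues, and, in the integral case, the identity $p\mu=(k-r)(k-s)$ (in your notation, $k$ the valency, $r>s$ the restricted eigenvalues) with the bounds $0<k-r<p$ and $0<k-s<2p$, forcing $\mu=k-r$, hence $r(s+1)=0$, which contradicts primitivity either way; this is sound and is essentially the textbook proof of the cited facts. Likewise your derivation of $\theta=0$ from the Seidel-matrix identities $Q^2=p\bfone-\J$ and $Q\odot Q=\J-\bfone$ (valid for any conference graph, not only Paley's) is a clean alternative to evaluating $\lambda-\mu+1$. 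What your version buys is self-containedness, with no external graph-theoretic references; what the paper's buys is brevity and the general formula $\hat\theta=|\lambda-\mu+1|\sqrt{\tfrac{n-1}{nrr^c}}$ attached to every strongly regular graph, which your argument does not record.
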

\begin{proof}
If $n=m\ell$, with $\ell,m\geq 2$, then Examples~\ref{CompleteGraph} or \ref{Composite} provide solutions
with $\hat\theta\neq 0$. In particular, we have a solution in dimension $4$,
with $\hat\theta\neq0$. By the remark preceding Lemma~\ref{Block}, there is
a solution in every dimension $n\geq 4$.

A different, and more geometric argument runs 
as follows. Put $n=n_1+n_2$, with $n_1,n_2\geq 2$.
Then the product of two round spheres of dimensions $n_1,n_2$ as in Example \ref{SphereProductEx} provides
a solution with $\hat\theta\neq 0$. These specific solutions have typically three different
eigenvalues for the curvature tensor $R$, and thus the off-diagonal elements of $S$ 
assume three different values.

%
The last claim follows from Theorem~\ref{MainThm1}.
\end{proof}

\section{Adjacency matrices of graphs}

For the material in this section we refer to \cite{Hall}, \cite{GodRoy} and \cite{BVM}. 
Suppose that $\Gamma$ is a finite simplicial graph, with vertex set $V$.
Then $\Gamma$ is encoded in the \emph{adjacency matrix} $A\in\RR^{V\times V}$,
which has entries 
\[
 A_{u,v}=\begin{cases} 1&\text{if }u,v\text{ are adjacent}\\ 0 &\text{else.}\end{cases}
\]
The matrix $A$ is symmetric and has entries $\{0,1\}$, with zeros on the diagonal.
In other words, $A$ satisfies the three equations
\[
 A=A^T,\quad A\odot\bfone=0,\text{ and } A\odot A=A.
\]
Conversely, every matrix with real entries satisfying these equations is the adjacency matrix of
a finite simplicial graph, unique up to isomorphism.
For example, the matrix \[\K=\J-\bfone\] is the adjacency matrix of the complete graph.
On the other extreme, $A=0$ is the adjacency matrix of a graph with no edges.

The \emph{complementary graph} $\Gamma^c$ has an edge between two distinct vertices
$u,v$ if and only if there is no edge in $\Gamma$ between $u$ and $v$. The adjacency
matrix of $\Gamma^c$ is thus \[A^c=\K-A.\]
\begin{Def}
A graph $\Gamma$ on $n$ vertices is \emph{$r$-regular} if every vertex has $r$ neighbors.
This translates into the matrix equation 
\[
 A\J=r\J
\]
for its adjacency matrix $A$.
\end{Def}
For $k\geq 1$, the $(u,v)$-entry of the matrix power $A^k$ counts the number of simplicial paths of length 
$k$ from the vertex $u$ to the vertex $v$.
Hence the graph is $r$-regular if and only if $A^2$ has all diagonal entries equal to $r$,
\[
 A^2\odot\bfone=r\bfone.
\]
\begin{Def}\label{srgdef}
A simplicial graph $\Gamma$ on $n$ vertices is \emph{strongly regular} if there are integers
$r,\lambda,\mu\geq 0$ such that the following hold.
\begin{enumerate}[\rm(i),nosep]
\item Every vertex has $r$ neighbors.
 \item Any two adjacent vertices have $\lambda$ common neighbors.
 \item Any two non-adjacent vertices have $\mu$ common neighbors.
\end{enumerate}
Then $\Gamma$ is called an $\srg(n,r,\lambda,\mu)$ for short. For the adjacency matrix $A$, this translates into the
equation
\[
           A^2=r\bfone+\lambda A+\mu A^c=r\bfone+(\lambda-\mu)A+\mu\K.
          \]
Indeed, the matrix entry $(A^2)_{u,v}$ counts the number of common neighbors of the two vertices $u,v$.
This yields immediately the equation for $A^2$.
In particular, a real $n\times n$ matrix $A$ is the adjacency matrix of an $\srg(n,r,\lambda,\mu)$ if and only if it satisfies the equations
\begin{equation}
\label{srgeqs}
A=A^T,\quad A\odot\bfone=0,\quad A\odot A=A,\text{ and }
 A^2=(\lambda-\mu)A+\mu\J+(r-\mu)\bfone,
\end{equation}
for real numbers $r,\lambda,\mu$.
\end{Def}
The complementary graph of an $\srg(n,r,\lambda,\mu)$ is an
$\srg(n,r^c,\lambda^c,\mu^c)$, with 
\[
r^c+r=n-1,\quad \lambda^c-\mu=n-2(r+1), \text{ and }\mu^c-\lambda=n-2r,
 \]
as is easily checked.

Our definition of a strongly regular graph allows for some degeneracies, such as $r=0$.
If adjacency (together with equality) is a nontrivial equivalence relation, then $\lambda=r-1$ and the graph is a finite disjoint union of $k$ complete graphs on $r+1$ vertices.
Then the adjacency matrix $A$ is a block diagonal matrix, with $k$ blocks of size $r+1$ with $1$ as off-diagonal entries.
The complementary graph corresponds to the situation when
non-adjacency is a nontrivial equivalence relation. Then the graph is
a complete $k$-partite graph, where each vertex set of a given color has size $r+1$.
We refer to \cite[Ch.~20]{GodRoy}.
A strongly regular graph $\Gamma$ is called \emph{primitive} if both $\Gamma$ and
$\Gamma^c$ are connected.

Suppose that $\Gamma$ is an $\srg(n,r,\lambda,\mu)$, and that $u$ is a vertex in $\Gamma$.
If $w$ is a vertex at distance $2$ from $u$, then there are $\mu$ paths of length $2$ connecting
$u$ and $w$. The number of vertices at distance $2$ from $u$ is then $n-1-r$, and hence
there are $\mu(n-1-r)$ paths of length $2$ connecting $u$ to vertices at distance $2$.
We may compute this number in another way. If $v$ is at distance $1$ from $u$, then 
$v$ is adjacent to $\lambda$ other vertices at distance $1$ from $u$. Hence $v$ is adjacent
to $r-\lambda-1$ vertices at distance $2$ from $u$, whence
\begin{equation}
\label{Relation}
 \mu(n-r-1)=r(r-\lambda-1).
\end{equation}
We derived this formula under the assumption that there are vertices at distance $2$.
A direct inspection shows, however, that it remains valid for the two exceptional cases 
$r=0$ and $r=\lambda+1$,
hence it is valid for all strongly regular graphs in our sense.

\section{The case of two matrix entries}

In this section we consider the solutions to (\ref{Basic}) for the special case that 
the off-diagonal entries of $S$ assume at most two different values.
If they assume exactly one value $x$, then $S=x\K$ and thus $S\J=x\K\J=x(n-1)\J$, whence $x=0$.
It remains to consider the case where the off-diagonal entries assume two different values $x\neq y$.
The matrix $S$ is thus of the form
\[
 S=xA+yA^c=(x-y)A+y\K=zA+y\K,
\]
where $A\neq 0,\K$ is an adjacency matrix of a graph $\Gamma$ on $n$ vertices, and $z=x-y\neq 0$. 
In view of the scaling invariance (\ref{Scale}) we may put $z=1$.
We note that 
\[\J^2=n\J,\quad \K\J=\J\K=(n-1)\J\text{ and }\K^2=(n-2)\K+(n-1)\bfone.\]
The condition $S\J=0$ becomes 
\begin{equation}
 \label{2ValuesReg}
 0  = S\J = A\J+y\K\J= A\J+y(n-1)\J.
\end{equation}
This holds if and only if $\Gamma$ is $r$-regular, for \[r=y(1-n).\]
Next we compute
\begin{align*}
 S\odot S& =A\odot A+2yA\odot\K+y^2\K\odot \K \\
 &=  A+2yA+y^2\K.
\end{align*}
Assuming $S\J=0$, we have
\begin{align*}
S^2 & =A^2+y(A\K+\K A)+y^2\K^2 \\
& = A^2+y(A\J+\J A-2A)+y^2(n-2)\K+y^2(n-1)\bfone\\
&= A^2+2yr\J-2yA+y^2(n-2)\K+y^2(n-1)\bfone\\
&= A^2-2yA-y^2n\K+y^2(1-n)\bfone
\end{align*}
and therefore
\begin{equation}
\label{2ValuesEq}
 S\odot S+S^2  =A^2+A+yr\K+yr\bfone.
\end{equation}
\begin{Thm}\label{MainThm1}
Suppose that the $n\times n$ matrix $S$ solves (\ref{Basic}) and that the off-diagonal entries
of $S$ assume precisely two different values $x\neq y$. Put $z=x-y$.
Then \[S=zA+y\K\]
and $A\neq0,\K$ is the adjacency matrix of a strongly regular graph $\Gamma$.

Conversely, if $A\neq 0,\K$ is the adjacency matrix of an $\srg(n,r,\lambda,\mu)$, then $S=zA+y\K$ solves
(\ref{Basic}), with 
\[\frac{y}{z}=\frac{r}{1-n}\quad\text{ and }
\quad\hat\theta=|\lambda-\mu+1|\sqrt{\frac{n-1}{nrr^c}},
 \]
 where $r^c=n-r-1$.
 
An integer $n$ occurs in this situation if and only if $n$ is either a composite number or
if $n=p$ is a prime with $p\equiv1\pmod4$. In the latter case $\theta=0$.
\end{Thm}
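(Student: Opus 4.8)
The plan is to treat the two implications of the correspondence separately, then read off $\theta$ and $\hat\theta$, and finally pin down the admissible values of $n$.

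For the forward direction I build on the reduction carried out just before the theorem: after scaling I may take $z=1$, so $S=A+y\K$ with $A\neq 0,\K$ the adjacency matrix of a graph $\Gamma$, and by (\ref{2ValuesReg}) the condition $S\J=0$ forces $\Gamma$ to be $r$-regular with $r=y(1-n)$. Substituting (\ref{2ValuesEq}) into the last equation of (\ref{Basic}) and subtracting the diagonal matrices $\theta S$ and $D$, the relation $S\odot S+S^2=\theta S+D$ becomes, on the off-diagonal, an identity $A^2=(\theta-1)A+c\,\K+(\text{diagonal})$ with $c=y(\theta-r)$ constant. Reading this entry by entry shows that the number of common neighbours of two distinct vertices depends only on adjacency: adjacent vertices share $\lambda=\theta-1+c$ and non-adjacent vertices share $\mu=c$. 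By Definition~\ref{srgdef} this says exactly that $\Gamma$ is strongly regular, and $\theta=\lambda-\mu+1$.

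For the converse I substitute the defining equation $A^2=(\lambda-\mu)A+\mu\J+(r-\mu)\bfone$ of (\ref{srgeqs}) into (\ref{2ValuesEq}). Regularity gives $S\J=0$ and $y/z=r/(1-n)$, and collecting terms turns $S\odot S+S^2$ into $(\lambda-\mu+1)A+(\mu+yr)\K+(\text{diagonal})$. Setting $\theta=\lambda-\mu+1$, I must verify that the $\K$-coefficient equals $\theta y$, i.e. $y(\theta-r)=\mu$; this is precisely where (\ref{Relation}), $\mu(n-r-1)=r(r-\lambda-1)$, enters, since inserting $y=-r/(n-1)$ and $\theta=\lambda-\mu+1$ and clearing denominators reduces the required identity to (\ref{Relation}). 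The leftover $\bfone$-term is absorbed into $D$, so $S$ solves (\ref{Basic}). For the scale-invariant size I compute the norm directly: the edge value is $x=r^c/(n-1)$ and the non-edge value is $y=-r/(n-1)$, taken on $nr$ and $nr^c$ ordered pairs respectively, so $\|S\|_2^2=nrx^2+nr^cy^2=nrr^c/(n-1)$ after using $r+r^c=n-1$; hence $\hat\theta=|\theta|/\|S\|_2=|\lambda-\mu+1|\sqrt{(n-1)/(nrr^c)}$.

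Finally I determine which $n$ admit an $A\neq 0,\K$. If $n=m\ell$ is composite, $\ell$ disjoint copies of the complete graph $K_m$ form a (degenerate) $\srg$ with $A\neq 0,\K$, so every composite $n$ occurs. Since a degenerate $\srg$ (a disjoint union of cliques, or its complementary complete multipartite graph) forces $n$ composite, a prime $n=p$ can only be realised by a \emph{primitive} $\srg$. The crux is to show such a graph must be a conference graph. Writing $s>0>t$ for the restricted eigenvalues and using the identity $n\mu=(r-s)(r-t)$ (a reformulation of (\ref{Relation})), primitivity yields $s\geq 1$, $t\leq -2$ and $r\leq n-2$. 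If $s,t$ were integers, then $p\mid(r-s)(r-t)$ together with $0<r-s<p$ gives $p\mid(r-t)$, and the bound $0<r-t<2p$ forces $r-t=p$; but then $\mu=(r-s)(r-t)/p=r-s$, which combined with $\mu=r+st$ yields $t=-1$, contradicting $t\leq-2$. Hence the eigenvalues are irrational, their multiplicities coincide, $\Gamma$ is a conference graph with parameters $(p,\tfrac{p-1}{2},\tfrac{p-5}{4},\tfrac{p-1}{4})$, which are integral exactly when $p\equiv 1\pmod 4$; in that case $\lambda-\mu=-1$, so $\theta=\lambda-\mu+1=0$. Conversely, for $p\equiv 1\pmod 4$ the Paley graph $P(p)$ realises such a conference graph. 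I expect this last step, ruling out integer eigenvalues on a prime number of vertices, to be the main obstacle, as the two routine implications are bookkeeping with the $\srg$ equation and (\ref{Relation}).
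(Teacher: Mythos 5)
Your proof is correct, and for the two main implications it takes essentially the same route as the paper: the forward direction reads strong regularity off (\ref{2ValuesReg}) and (\ref{2ValuesEq}) exactly as the paper does, and the converse substitutes the $\srg$ equation into (\ref{2ValuesEq}) and verifies $\mu=y(\theta-r)$ via (\ref{Relation}); your direct computation $||S||_2^2=\frac{nrr^c}{n-1}$ is the same quantity the paper obtains as $\tr(D)$. The genuine difference is the final step, determining which $n$ occur. The paper disposes of the prime case by citation: an $\srg$ on a prime number of vertices is a conference graph by \cite[10.3.4]{GodRoy}, and then $p\equiv1\pmod 4$ by \cite[10.3.2]{GodRoy}. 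You instead give a self-contained argument: from $n\mu=(r-s)(r-t)$ (correctly identified as a reformulation of (\ref{Relation})) you rule out integral restricted eigenvalues when $n=p$ is prime, since primitivity gives $0<r-s<p$ and $0<r-t\leq 2r<2p$, so $p\mid(r-t)$ forces $r-t=p$, hence $\mu=r-s$ and $t=-1$, a contradiction; irrational eigenvalues then force equal multiplicities, conference parameters $(p,\frac{p-1}{2},\frac{p-5}{4},\frac{p-1}{4})$, and $p\equiv1\pmod4$ by integrality, with $\theta=\lambda-\mu+1=0$. This buys self-containedness and makes the arithmetic content of the theorem transparent, at the cost of invoking (implicitly) Perron--Frobenius for $r>s$, the spectral bound $-t\leq r$ for $r$-regular graphs, and the standard fact that unequal multiplicities force rational (hence integral) eigenvalues; the paper's citation route is shorter. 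A cosmetic difference: for composite $n$ you use disjoint unions of complete graphs as in Example~\ref{CompleteGraph}, while the paper points to Example~\ref{Composite}; both serve the purpose.
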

We note that the passage from $\Gamma$ to the complementary graph $\Gamma^c$ amounts to a sign change of $\theta$,
since $\lambda^c-\mu^c+1=-(\lambda-\mu+1)$.
\begin{proof}
Suppose that $S=A+y\K$ solves (\ref{Basic}), where $A\neq 0,\K$ is the adjacency matrix of a graph
$\Gamma$ on $n$ vertices. Then (\ref{2ValuesReg}) shows that $\Gamma$ is  $r$-regular, with $r=y(1-n)$. In particular, the matrix $A^2$ has $r$ on its diagonal.
By (\ref{2ValuesEq}) we have
\[
 A^2=(\theta -1)A+y(\theta -r)\K+D-yr\bfone.
\]
Hence $D=r(y+1)\bfone$ and $A$ is the adjacency matrix of
a strongly regular graph. Because of the scaling invariance (\ref{Scale}) this conclusion
remains valid for all other values of $z\neq0$.

Assume now that $A\neq 0,\K$ is the adjacency matrix of an $\srg(n,r,\lambda,\mu)$, whence
\[A^2=(\lambda-\mu)A+\mu\K+r\bfone.\] If we put $y=r/(1-n$),
then $A+y\K$ solves (\ref{Basic}), with  \[\theta=\lambda-\mu+1.\]
Indeed, $\mu=y(\theta-r)$ holds by (\ref{Relation}).
We have then $\tr(D)=nr\frac{n-r-1}{n-1}=\frac{nrr^c}{n-1}$, which yields the expression for $\hat\theta$.

If the integer $n=m\ell$ is decomposable, then Example~\ref{Composite} provides the
existence of a non-primitive strongly regular graph on $n$ vertices, while the Paley graphs
in Example~\ref{Palyegraph} provide examples for $n=p$, when $p\equiv 1\pmod 4$ is a prime.
A strongly regular graph whose number of vertices is a prime $p$ is a \emph{conference graph} by
\cite[10.3.4]{GodRoy} and then ${p\equiv1\pmod 4}$ by \cite[10.3.2]{GodRoy}. A conference graph
is an $\srg(n,\frac{n-1}{2},\frac{n-5}{4},\frac{n-1}{4})$, whence $\theta=0$.
\end{proof}
It is therefore of some interest to consider the parameters of  strongly regular graphs
and the resulting solutions to (\ref{Basic}).
The first example arises from non-primitive strongly regular graphs.
It corresponds to the Weyl curvature tensor of a product of $m$ round spheres of dimension $\ell$.
\begin{Ex}[Disjoint unions of complete graphs]\label{CompleteGraph}
Let $\Gamma$ be a graph which is a disjoint union of $m$ complete graphs on $\ell$ vertices.
Then $\Gamma$ is an $\srg(m\ell,\ell-1,\ell-2,0)$. For $m\geq 2$ we obtain solutions with 
\[
 n=m\ell,\qquad
 \theta=\ell-1,\qquad
 \hat\theta=\frac{1}{\ell}\sqrt{\frac{(\ell-1)(m\ell-1)}{m(m-1)}}.
\]
\end{Ex}
Primitive strongly regular graphs ($r,\mu\neq 0$) exist only for very specific values for the
number of vertices $n$, see eg.~\cite{BVM}.
We mention a few examples.
\begin{Ex}[Kneser Graphs $K(m,2)$]\label{Knesergraph}
The vertex set of the Kneser graph $K(m,2)$ is the set of transpositions $(i,j)$ in the symmetric group $\mathrm{Sym}(m)$.
Two transpositions are adjacent in $\Gamma=K(m,2)$ if they commute. Thus $\Gamma$ has 
$n=\binom{m}{2}$ vertices, and $\Gamma$ is $r$-regular for $r=\binom{m-2}{2}$. 
Two commuting transpositions commute with 
$\lambda=\binom{m-4}{2}$ other transpositions and two non-commuting transpositions commute
with $\mu=\binom{m-3}{2}$ other transpositions.
Thus $\Gamma$ is an $\srg(\binom{m}{2},\binom{m-2}{2},\binom{m-4}{2},\binom{m-3}{2})$, and
for $m\geq4$ we obtain solutions with
\[
n=\binom{m}{2},\qquad
\theta=5-m,\qquad
\hat\theta=|5-m|\sqrt{\frac{m+1}{m(m-1)(m-2)(m-3)}}.
\] 
\end{Ex}
There exist also sporadic examples of strongly regular graphs.
\begin{Ex}[Fischer's group $\mathrm{Fi}_{24}$]
The Fischer group $\mathrm{Fi}_{24}$, which is one of the sporadic finite simple groups discovered by B.~Fischer,
acts as a permutation group of degree $3$ on a set $V$ of cardinality $306936$.
The action of $\mathrm{Fi}_{24}$ on $V\times V$ has three orbits $D,E,F$, one of which is the diagonal $D$.
The orbit $E$ can be viewed as an edge set of a graph with vertex set $V$.
In this way, one obtains a strongly regular graph $\Gamma$, which is an
$\srg(306936, 31671, 3510, 3240)$. 
Hence we have
\[
 n=306936,\qquad
\theta=271,\qquad
\hat\theta=\frac{271}{3024}\sqrt{\frac{785}{748374}}\approx 2.9024382\cdot 10^{-3}.
\]
We refer to \cite{BVM} for more details.
\end{Ex}
\begin{Ex}[Collinearity graphs of finite generalized quadrangles]\label{GQgraph}
Let $\cQ$ be a finite generalized quadrangle of order $(s,t)$. Hence there are no digons in
$\cQ$, every line is incident with 
$s+1$ points and every point is incident with $t+1$ lines. For every non-collinear point-line pair
$(a,\ell)$, there is a unique line $h$ through $a$ and a unique point $b$ on $\ell$ such that
$b$ is on $h$.
Examples arise from symplectic forms in dimension $4$ and from nondegenerate quadratic and hermitian forms of Witt index $2$
over finite fields.
These examples have $s=t$ or $s=t^2$ or $s^2=t$, where $s$ is a prime power.
The \emph{collinearity graph} $\Gamma$ of $\cQ$ has the points of $\cQ$ as vertices, and two points are adjacent if they
are collinear. The number of points is $n=sts+ts+s+1=(s+1)(ts+1)$ and every point is collinear with 
$r=s(t+1)$ points. Two collinear points are collinear with $s-1$ other points,
and two non-collinear points are collinear with $t+1$ points.
Hence $\Gamma$ is an $\srg((s+1)(ts+1),s(t+1),s-1,t+1)$, and
\[
n=(s+1)(st+1),\qquad
\theta=s-t-1,\qquad
 \hat\theta =\frac{|s-t-1|}{s}\sqrt{\frac{st+t+1}{(s+1)(ts+1)t(t+1)}} .
\]
\end{Ex}
The next two examples are Cayley graphs.
\begin{Ex}[The rook's graph] \label{Rooksgraph}
The vertices of the \emph{rook's graph} $\Gamma$ are the fields of an $m\times m$ chessboard, for $m\geq 2$.
Two vertices are adjacent if the rook can pass from one to the other in one move.
Hence $\Gamma$ is $2(m-1)$-regular. Every pair of adjacent vertices 
has $m-2$ common adjacent vertices and every pair of non-adjacent vertices has $2$ common
adjacent vertices. Thus $\Gamma$ is an $\srg(m^2,2(m-1),m-2,2)$ and
\[
n=m^2,\qquad
\theta=m-3,\qquad
 \hat\theta=\frac{|m-3|}{m(m-1)}\sqrt{\frac{m+1}{2}} .
\]
We note that the rook's graph is a Cayley graph for the group $G=\ZZ/m\times\ZZ/m$ for the symmetric generating set 
$\{(i,j)\in G\mid (i,j)\neq(0,0)\text{ and } i=0\text{ or }j=0\}$.
\end{Ex}
The following strongly regular graphs were discovered by Paley~\cite{Paley}.
\begin{Ex}[Paley Graphs]\label{Palyegraph}
Let $\FF_q$ denote the finite field of order $q$, where $q\equiv1\pmod 4$ is a prime power.
Let $Q\subseteq\FF_q^\times$ denote the set of nonzero squares. Then $Q$ is the unique subgroup of index $2$ in the multiplicative
group $\FF_q^\times$, and $-1\in Q$. 
Let $\Gamma$ denote the Cayley graph of the additive group $\FF_q$ with respect to the symmetric generating set 
$Q$. Then $\Gamma$ is $r$-regular, with $r=\frac{q-1}{2}$.
We note that the additive group of $\FF_q$ acts transitively on $\Gamma$.
The number of common neighbors of $0$ and $c\neq0 $ is the number of all field elements $x^2\neq 0,c$ 
of the form $x^2-c=y^2$.
The number of solutions $(x,y)$ to the equation $x^2-y^2=c$ is $q-1$.
Hence there are $\frac{q-1}{4}$ common neighbors if $c$ is not a square.
If $c$ is a square, then there are $\frac{q-1}{4}-1$ common neighbors.
Therefore the Paley graph 
$\Gamma$ is an $\srg(q,\frac{q-1}{2},\frac{q-5}{4},\frac{q-1}{4})$, and
\[
n=q,\qquad
 \theta=\hat\theta=0.
\]
J\"ager rediscovered the Paley graphs in his thesis~\cite[Thm.~2.2.1]{Jae} for the case where $q\equiv1\pmod4$ is a prime, using Legendre symbols. 
The article \cite{Jones} provides many interesting details about Paley graphs and their history.
\end{Ex}
The last two examples were Cayley graphs. Strongly regular Cayley graphs can be described in a uniform way as follows.
\begin{Def}
Let $G$ be a finite group of order $n$ and let $S=S^{-1}$ be a symmetric subset not containing the identity.
The \emph{Cayley graph} $\Gamma=\Gamma_{G,S}$ has $G$ as its vertex set. Two vertices $a,b\in G$ are adjacent if and only if
$a^{-1}b\in S$. Hence $\Gamma$ is $r$-regular, with $r=|S|$. The group $G$ acts on $\Gamma$ via its natural left regular action on itself.
The common neighbors of $e$ and $a\neq e$ are the elements $s\in S$ with $a^{-1}s=t\in S$.
Hence $\Gamma$ is an $\srg(n,r,\lambda,\mu)$ if and only if the equation $a=st^{-1}$ has $\lambda$ solutions $(s,t)\in S\times S$ for all $a\in S$,
and $\mu$ solutions for all $a\not\in S\cup\{e\}$. In this case $S$ is called a \emph{regular partial difference set}.
The survey article \cite{Ma} contains many examples of regular partial difference sets.
In \cite{Ott1,Ott2}, regular partial difference sets in additive groups of finite fields are studied, using properties of Jacobi sums.

The following construction generalizes the rook's graph.
\end{Def}
\begin{Ex}[Families of subgroups]\label{PDS}
Let $(A,+)$ be a finite abelian group of order $n=m^2$ and let $H_1,\ldots,H_\ell$ be subgroups of order $m$,
with $H_i\cap H_j=\{0\}$ for $i\neq j$. We put $S=\bigcup_{i=1}^\ell H_i -\{0\}$ and we note that $\ell\leq m+1$.
For $i\neq j$ we have $A=H_i\oplus H_j$.
If $a\in A$, then the equation $a=s-t$ has $\ell(\ell-1)$ solutions $(s,t)\in S\times S$ for $a\not\in S\cup\{0\}$,
and $\ell^2-3\ell+m$ solutions for $a\in S$.
Hence $\Gamma_{A,S}$ is an $\srg(m^2,\ell(m-1),\ell^2-3\ell+m,\ell^2-\ell)$ and for $1\leq \ell\leq m$ we obtain solutions with
\[
 n=m^2,\quad
 \theta=m-2\ell+1,\quad
 \hat\theta=\frac{|m-2\ell+1|}{m}\sqrt{\frac{m+1}{\ell(m-1)(m-\ell+1)}}.
\]
For example, we may put $A=\FF_q\oplus\FF_q$ for any finite field $\FF_q$, and $H_i=\{(x,m_ix)\mid x\in\FF_q\}$, for $\ell$ distinct elements $m_1,\ldots,m_\ell\in\FF_q$.
For $\ell=2$ we recover the rook's graph.
\end{Ex}

\section{Solutions from group rings}

Let $G$ be a finite group. For $\KK=\RR,\CC$, let $R(G,\KK)=\KK^G$ denote the commutative ring of all $\KK$-valued functions on $G$.
Since $G$ is finite, the evaluation maps $ev_g:\phi\longmapsto\phi(g)$, for $g\in G$, span the dual space of $R(G,\KK)$.
Hence the dual space of $R(G,\KK)$ can be naturally identified with the underlying vector space of the \emph{group ring} $\KK[G]$, which consists
of finite linear combinations of group elements.
The multiplication in $\KK[G]$ induces via this duality a \emph{comultiplication} $\Delta:R(G,\KK)\longrightarrow R(G,\KK)\otimes R(G,\KK)\cong\KK^{G\times G}$, 
\[\Delta(\phi)(a,b)=\phi(ab).\] 
We define the \emph{symmetry} $\sigma$ and the \emph{augmentation} $\epsilon$ on $R(G,\KK)$ as
\[
 \phi^\sigma(g)=\phi(g^{-1})\quad\text{and}\quad\epsilon(\phi)=\phi(e).
\]
These data turn $R(G,\KK)$ into a symmetric \emph{Hopf algebra}. We refer to~\cite{HM} Theorem 3.76 and the material in this section.
Since $G$ is finite, the standard bilinear form
\[
 \langle\phi,\psi\rangle = \sum_{g\in G}\phi(g)\psi(g)
\]
on $R(G,\KK)$ yields a vector space isomorphism $\iota$ between $R(G,\KK)$ and its dual $\KK[G]$,
with 
\[
 \iota(\phi)=\sum_{g\in G}\phi(g)g.
\]
Under this isomorphism, the (possibly noncommutative) product in the group ring $\KK[G]$ translates into
the \emph{convolution product} $*$ on $R(G,\KK)$, which is given by
\[(\phi *\psi)(g)=\sum_{h}\phi(h)\psi(h^{-1}g).\]
\begin{Thm}\label{MainThm2}
Let $G$ be a finite group of order $n$ and let $\phi\in R(G,\RR)$. 
We define a matrix $S\in\RR^{G\times G}$ via \[S_{a,b}=\Delta(\phi)(a^{-1},b)=\phi(a^{-1}b).\]
Then $S$ satisfies (\ref{Basic}) for a real constant $\theta$ if and only if $\phi$ satisfies the equations 
\begin{equation}\label{Hopf}
 \phi=\phi^\sigma,\quad \epsilon(\phi)=0,\quad  \langle\phi,1\rangle=0,\quad \phi^2+\phi*\phi=\theta\phi+||\phi||_2^2\delta_e,
\end{equation}
where $\delta_e$ is Kronecker's $\delta$-function, $\delta_e(g)=\delta_{e,g}$.
For the solution $S$ we have then 
\[
 n=|G|,\quad\hat\theta=\frac{|\theta|}{\sqrt{n}||\phi||_2}.
\]
\end{Thm}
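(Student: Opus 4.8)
The plan is to translate each of the four equations in (\ref{Basic}) into a statement about $\phi$ by unwinding the definition $S_{a,b}=\phi(a^{-1}b)$, and to recognize the resulting conditions as the Hopf-algebra equations (\ref{Hopf}). First I would record that the map $(a,b)\mapsto a^{-1}b$ makes $S$ a \emph{$G$-circulant}: the entry depends only on $a^{-1}b$, so left translation by $G$ acts by simultaneous permutation of rows and columns and leaves $S$ invariant. This symmetry is what forces the various matrix identities to collapse into single equations in $R(G,\RR)$.

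The routine translations come first. Symmetry $S=S^T$ means $\phi(a^{-1}b)=\phi(b^{-1}a)=\phi((a^{-1}b)^{-1})$ for all $a,b$, i.e. $\phi=\phi^\sigma$. The zero-diagonal condition $S\odot\bfone=0$ reads $S_{a,a}=\phi(e)=0$, which is exactly $\epsilon(\phi)=0$. For the row-sum condition $S\J=0$, the $a$-th row sum is $\sum_b\phi(a^{-1}b)=\sum_{g}\phi(g)=\langle\phi,1\rangle$ (reindexing $g=a^{-1}b$), independent of $a$; so $S\J=0$ is equivalent to $\langle\phi,1\rangle=0$. These three are immediate.

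The substantive step is the quadratic equation $S\odot S+S^2=\theta S+D$. I would compute each term entrywise. The Hadamard square has entry $(S\odot S)_{a,b}=\phi(a^{-1}b)^2=\phi^2(a^{-1}b)$, where $\phi^2$ denotes the pointwise square. The matrix product has entry $(S^2)_{a,b}=\sum_c\phi(a^{-1}c)\phi(c^{-1}b)$, and substituting $h=a^{-1}c$ (so $c^{-1}b=h^{-1}(a^{-1}b)$) gives $\sum_h\phi(h)\phi(h^{-1}(a^{-1}b))=(\phi*\phi)(a^{-1}b)$, the convolution evaluated at $g=a^{-1}b$. The right-hand side contributes $\theta\,\phi(a^{-1}b)$ off the diagonal; on the diagonal ($a=b$, i.e. $g=e$) the extra term is $D_{a,a}$. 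By the remark after (\ref{Basic}) the diagonal of $D$ is $D_{a,a}=\sum_c S_{a,c}S_{c,a}=\sum_c\phi(a^{-1}c)\phi(c^{-1}a)=\sum_g\phi(g)\phi(g^{-1})$, and using $\phi=\phi^\sigma$ this is $\sum_g\phi(g)^2=\|\phi\|_2^2$. Hence $D=\|\phi\|_2^2\,\bfone$ corresponds to the single summand $\|\phi\|_2^2\,\delta_e$ in $R(G,\RR)$. Collecting all entries, the matrix equation holds for every pair $(a,b)$ if and only if, as functions of the single variable $g=a^{-1}b$,
\[
\phi^2+\phi*\phi=\theta\phi+\|\phi\|_2^2\,\delta_e,
\]
which is the last equation of (\ref{Hopf}). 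Since every element of $G$ arises as some $a^{-1}b$, the equivalence is exact in both directions.

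The main obstacle—though it is bookkeeping rather than a genuine difficulty—is keeping the diagonal term straight: one must check that the Hadamard-square and convolution contributions at $g=e$ combine correctly with $D_{a,a}$, and that no off-diagonal contamination from $D$ occurs (it does not, as $D$ is diagonal and $\delta_e$ is supported at $e$). Finally I would read off the invariants: $n=|G|$ is the order of the vertex set, and since $\tr(D)=n\|\phi\|_2^2$ we get $\|S\|_2=\sqrt{\tr(D)}=\sqrt{n}\,\|\phi\|_2$, whence $\hat\theta=|\theta|/\|S\|_2=|\theta|/(\sqrt{n}\,\|\phi\|_2)$, as claimed.
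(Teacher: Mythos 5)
Your proposal is correct and takes essentially the same approach as the paper's proof: an entrywise translation of (\ref{Basic}) in which the Hadamard square becomes the pointwise square $\phi^2$, the matrix product becomes the convolution $\phi*\phi$, the diagonal matrix $D$ becomes $||\phi||_2^2\,\delta_e$, and $\tr(D)=n||\phi||_2^2$ gives the formula for $\hat\theta$. Your write-up merely spells out the reindexing and diagonal bookkeeping that the paper leaves implicit.
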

\begin{proof}
The first equation in (\ref{Hopf}) expresses the symmetry $S=S^T$, the second equation expresses the fact 
that $S_{a,a}=0$ for all $a$ and the third equation says that the sum over every row in the matrix $S$ is zero.
The matrix entries in $S\odot S$ are given by \[(S\odot S)_{a,b}=\phi^2(a^{-1}b)=\Delta(\phi^2)(a^{-1},b).\]
The matrix entries in $S^2$ are given by 
\[
 (S^2)_{a,b}=\sum_c\phi(a^{-1}c)\phi(c^{-1}b)=\sum_g\phi(g)\phi(g^{-1}a^{-1}b)=(\phi*\phi)(a^{-1}b)=\Delta(\phi*\phi)(a^{-1},b).
\]
In particular, $(S^2)_{a,a}=(\phi*\phi)(e)=\langle\phi,\phi^\sigma\rangle$ and
hence $||S||_2^2=n||\phi||_2^2$ if $\phi=\phi^\sigma$.
\end{proof}
In a completely analogous fashion one proves the following result.
\begin{Thm}\label{MainThm3}
Let $G$ be a finite group of order $n$ and let $\alpha\in R(G,\RR)$. For $a,b\in G$ put \[A_{a,b}=\Delta(\alpha)(a^{-1},b)=\alpha(a^{-1}b).\]
Then $A$ is the incidence matrix of an $\srg(n,r,\lambda,\mu)$ if and only if $\alpha$ satisfies the equations 
\begin{equation}\label{Hopfsrg}
 \alpha=\alpha^\sigma,\quad \epsilon(\alpha)=0,\quad  \alpha^2=\alpha,\quad \alpha*\alpha=(\lambda-\mu)\alpha+\mu+(r-\mu)\delta_e.
\end{equation}
\end{Thm}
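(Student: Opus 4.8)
The plan is to mirror the proof of Theorem~\ref{MainThm2}, replacing the equations (\ref{Basic}) by the four defining equations (\ref{srgeqs}) of the adjacency matrix of a strongly regular graph. The one structural fact I would use throughout is that the assignment $(a,b)\mapsto\alpha(a^{-1}b)$ turns pointwise operations in $R(G,\RR)$ into entrywise matrix operations and the convolution product into ordinary matrix multiplication; moreover, since $(a,b)\mapsto a^{-1}b$ is surjective onto $G$, an entrywise identity between such matrices holds if and only if the corresponding pointwise identity of functions holds.

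With this dictionary in hand, the first three equations of (\ref{srgeqs}) translate immediately. The symmetry $A=A^T$ becomes $\alpha=\alpha^\sigma$, because $A^T_{a,b}=\alpha(b^{-1}a)=\alpha^\sigma(a^{-1}b)$. The vanishing-diagonal condition $A\odot\bfone=0$ becomes $\epsilon(\alpha)=0$, since $A_{a,a}=\alpha(e)=\epsilon(\alpha)$. The $\{0,1\}$-condition $A\odot A=A$ reads entrywise as $\alpha(a^{-1}b)^2=\alpha(a^{-1}b)$, hence, by surjectivity, is equivalent to the pointwise identity $\alpha^2=\alpha$ in $R(G,\RR)$.

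The only substantial step is the quadratic equation. Exactly as in Theorem~\ref{MainThm2}, I would compute $(A^2)_{a,b}=\sum_c\alpha(a^{-1}c)\alpha(c^{-1}b)=(\alpha*\alpha)(a^{-1}b)$, so that $A^2$ corresponds to $\alpha*\alpha$. The point requiring care is to identify the two remaining matrices on the right-hand side of (\ref{srgeqs}) with evaluations of functions: $\J_{a,b}=1(a^{-1}b)$, where $1$ is the constant function, and $\bfone_{a,b}=\delta_e(a^{-1}b)$, since $a=b$ exactly when $a^{-1}b=e$. Then $A^2=(\lambda-\mu)A+\mu\J+(r-\mu)\bfone$ becomes $(\alpha*\alpha)(a^{-1}b)=(\lambda-\mu)\alpha(a^{-1}b)+\mu\cdot1(a^{-1}b)+(r-\mu)\delta_e(a^{-1}b)$, which by surjectivity is equivalent to $\alpha*\alpha=(\lambda-\mu)\alpha+\mu+(r-\mu)\delta_e$.

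Collecting the four equivalences shows that $A$ satisfies (\ref{srgeqs}) for some reals $r,\lambda,\mu$ if and only if $\alpha$ satisfies (\ref{Hopfsrg}); by Definition~\ref{srgdef} this is exactly the assertion that $A$ is the adjacency matrix of an $\srg(n,r,\lambda,\mu)$. I do not expect any genuine obstacle here: the whole argument is bookkeeping, and the only place to be careful is the translation of the structural matrices $\J$ and $\bfone$ into the functions $1$ and $\delta_e$, together with the repeated appeal to the surjectivity of $(a,b)\mapsto a^{-1}b$.
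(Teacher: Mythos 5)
Your proof is correct and matches the paper's approach: the paper proves Theorem~\ref{MainThm3} by declaring it ``completely analogous'' to Theorem~\ref{MainThm2}, and your dictionary (pointwise product $\leftrightarrow$ Hadamard product, convolution $\leftrightarrow$ matrix product, $\J\leftrightarrow 1$, $\bfone\leftrightarrow\delta_e$) together with the surjectivity of $(a,b)\mapsto a^{-1}b$ is exactly the content of that analogy. Nothing is missing.
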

This result corresponds to (\ref{srgeqs}).
If (\ref{Hopfsrg}) holds, then the function 
\[
 \phi=\alpha+\frac{r}{1-n}(1-\delta_e)
\]
solves (\ref{Hopf}) by Theorem~\ref{MainThm1}, with $\hat\theta=|\lambda-\mu+1|\sqrt{\frac{n-1}{nrr^c}}$ and $r^c=n-r-1$.
\begin{Rem}
We can translate this result into the group ring $R(G,\RR)$, using the isomorphism $\iota:R(G,\RR)\longrightarrow\RR[G]$
as follows. The map $\alpha\in R(G,\RR)$ satisfies the conditions (\ref{Hopfsrg}) if and only if the element 
$a=\iota(\alpha)=\sum_g\alpha(g)g$ satisfies the conditions
\[
 a=a^\sigma,\quad
 \epsilon(a)=0,\quad
 a\odot a=a,\quad
 a^2=(\lambda-\mu)a+\mu w+(r-\mu)e,
\]
where $\big(\sum_ga_gg\big)^\sigma=\sum_ga_gg^{-1}$, $\epsilon\big(\sum_ga_gg\big)=a_e$,
$\big(\sum_ga_gg\big)\odot\big(\sum_hb_hh\big)=\sum_ga_gb_gg$ and $w=\sum_gg$.
These conditions can be found in the literature, see eg.~\cite[Thm.~1.3]{Ma}.
\end{Rem}

Suppose that $(G,+)$ is a finite abelian group. For subsets $A,B\subseteq G$ with characteristic functions 
$\chi_A,\chi_B$ we have 
\[
 (\chi_A*\chi_B)(g)=|A\cap (g-B)|.
\]
The following generalizes Examples~\ref{CompleteGraph} (disjoint unions of complete graphs) and \ref{Rooksgraph} (the rook's graph).
\begin{Ex}\label{Composite}
Let $L,M$ be finite abelian groups of orders $\ell,m\geq 2$ and put $G=L\times M$.
We put
\begin{align*}
 A&=\{(a,0)\in G\mid a\neq 0\}\\
 B&=\{(0,b)\in G\mid b\neq 0\}\\
 C&=\{(a,b)\in G\mid a,b\neq 0\}\\
\end{align*}
and we consider the function
\begin{align*}
\phi=s\chi_A+t\chi_B+\chi_C
\end{align*}
for real parameters $s,t$.
We have $\phi(0,0)=0$, $\phi^\sigma=\phi$, and
\[
 \langle\phi,1\rangle=s(\ell-1)+t(m-1)+(\ell-1)(m-1).
\]
Moreover,
\[
 \phi^2=s^2\chi_A+t^2\chi_B+\chi_C.
\]
We obtain
\[
 \phi*\phi=s^2\chi_A*\chi_A+t^2\chi_B*\chi_B+\chi_C*\chi_C+2st\chi_A*\chi_B+2t\chi_B*\chi_C+2s\chi_C*\chi_A.
\]
 For $a,b\neq 0$ we find that 
 \begin{align*}
 (\phi^2+\phi*\phi)(0,0)&=s^2(\ell-1)+t^2(m-1)+(\ell-1)(m-1)\\
  (\phi^2+\phi*\phi)(a,0)&=s^2(\ell-1)+(\ell-2)(m-1)+2t(m-1) \\
  (\phi^2+\phi*\phi)(0,b)&=t^2(m-1)+(\ell-1)(m-2)+2s(\ell-1)\\
  (\phi^2+\phi*\phi)(a,b)&=1+(\ell-2)(m-2)+2st+2s(\ell-2)+2t(m-2),
 \end{align*}
 whence 
 \begin{align*}
 \phi^2+\phi*\phi &= (s^2(\ell-1)+t^2(m-1)+(\ell-1)(m-1))\delta_{(0,0)} \\
 &\quad +(s^2(\ell-1)+(\ell-2)(m-1)+2t(m-1))\chi_A\\
 &\quad +(t^2(m-1)+(\ell-1)(m-2)+2s(\ell-1))\chi_B\\
 &\quad +(1+(\ell-2)(m-2)+2st+2s(\ell-2)+2t(m-2))\chi_C.
 \end{align*}
 The equation $\phi^2+\phi*\phi=\theta\phi+u\delta_{(0,0)}$ then has the following solutions,
 for $n=m\ell$.
 \begin{center}
 \begin{tabular}{c|c|c|c}
 $s$ & $t$ & $\theta$  & $\hat\theta$ \\ \hline
  $\vphantom{\bigg|}\frac{1-m}{2}$ & $\frac{1-\ell}{2}$ & $\frac{4-(m-1)(\ell-1)}{2}$ & $\frac{|4-(m-1)(\ell-1)|}{\sqrt{m\ell(\ell-1)(m-1)(m+\ell+2)}}$ \\ \hline
  $\vphantom{\bigg|}\frac{\ell(1-m)}{\ell-1}$ & $1$ & $1-m\ell$ & $\frac{1}{\ell}\sqrt{\frac{(\ell-1)(m\ell-1)}{m(m-1)}} $\\ \hline
  $\vphantom{\bigg|}1$ & $\frac{m(1-\ell)}{m-1}$ & $1-m\ell$ & $\frac{1}{m}\sqrt{\frac{(m-1)(m\ell-1)}{\ell(\ell-1)}} $ 
 \end{tabular}
\end{center}
The first of these solutions is also contained in J\"ager's Thesis~\cite[Thm.~2.2.5]{Jae}.
For $m=\ell$ we recover (the negative of) the solution given by the rook's graph \ref{Rooksgraph}. 
The second and third solution correspond to Example~\ref{CompleteGraph}.
\end{Ex}

\section{Solutions from multiplicative characters}
\label{MultCharSection}
We recall some facts about finite (hence compact) abelian groups. Chapter 7 in \cite{HM} is an excellent reference for this material.
We note that we invoke here Pontrjagin duality and the Peter-Weyl Theorem for finite abelian groups, where they are 
readily proved by simple means.
\begin{Facts}
Let $A$ be a finite abelian group. A \emph{character} is a homomorphism \[\alpha:A\longrightarrow\Sphere^1\subseteq\CC^\times.\]
The characters of $A$ form under pointwise multiplication an abelian group $X(A)$, the \emph{character group} or \emph{Pontrjagin dual} of $A$.
We denote the trivial character by $\eps$. The multiplicative inverse of a character $\alpha$ is its complex conjugate $\bar\alpha$.
The duality pairing $X(A)\times A\longrightarrow\Sphere^1$
allows us in particular to view the elements of $A$ as characters for $X(A)$, and
Pontrjagin duality asserts that the natural homomorphism
$A\longrightarrow X(X(A))$ is an isomorphism. Since $A$ is finite, there is also a (non-natural) isomorphism
$A\cong X(A)$.
The \emph{annihilator} of a subset $B\subseteq A$ is the subgroup
\[
 B^\perp=\{\alpha\in X(A)\mid \alpha(b)=1\text{ for all }b\in B\}.
\]
By the annihilator mechanism, restriction of characters provides an isomorphism
\[X(A)/B^\perp\xrightarrow{\cong}X(\langle B\rangle).\]
We observe that
every nontrivial element $z\in\Sphere^1$ in its natural action on $\CC$ has $0\in\CC$ as its unique fixed point.
If $Y$ is a subgroup of $X(A)$ and if $a\in A$,
then 
\begin{equation}
 \label{summation}
 \sum_{\alpha\in Y}\alpha(a)=\begin{cases} |Y| &\text{ if }Y\subseteq a^\perp\\ 0 & \text{ else,}\end{cases}
\end{equation}
because the left-hand side is a fixed point for the action of the group $Y$ on $\CC$
through the character~$a$.
In particular, we have for $\alpha,\beta\in X(A)$ 
\[
 \langle \alpha, \beta\rangle=\sum_{a\in A}(\alpha\beta)(a)=
 \begin{cases} |A|&\text{ if }\alpha\beta=\eps\\0&\text{ else.}\end{cases}
\]
The Peter-Weyl Theorem \cite[Thm.~3.7]{HM} asserts among other things that the characters, viewed as complex functions on $A$,
span the complex vector space $R(A,\CC)=\CC^A$ and that they form an orthonormal basis with respect to the hermitian
inner product 
\[
 (\phi|\psi)=\frac{1}{|A|}\sum_{a\in A}\bar\phi(a)\psi(a).
\]
\end{Facts}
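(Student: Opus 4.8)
The plan is to derive every assertion in this block for a finite abelian group $A$ from just two inputs: the structure theorem $A\cong\prod_i\ZZ/n_i\ZZ$, and the divisibility of the circle group $\Sphere^1$. All of the nontrivial content then collapses to a single fixed-point computation.

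The one lemma I would isolate is this: for any homomorphism $\chi\colon H\to\Sphere^1$ from a finite abelian group $H$, one has $\sum_{h\in H}\chi(h)=|H|$ if $\chi$ is trivial and $0$ otherwise. Indeed, writing $s$ for the sum, multiplication by $\chi(h_0)$ and reindexing $h\mapsto h_0h$ give $\chi(h_0)\,s=s$ for every $h_0\in H$; if $\chi$ is nontrivial then some $\chi(h_0)\neq 1$ forces $s=0$, and if $\chi$ is trivial every summand equals $1$. The summation formula (\ref{summation}) is the instance $H=Y\subseteq X(A)$ with $\chi$ the evaluation-at-$a$ homomorphism on $X(A)$, whose triviality is precisely the condition $Y\subseteq a^\perp$. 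The orthogonality relation $\langle\alpha,\beta\rangle=\sum_{a\in A}(\alpha\beta)(a)$ is the instance $H=A$, $\chi=\alpha\beta$, which is trivial exactly when $\alpha\beta=\eps$.

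Next I would pin down the character count. For a cyclic factor $\ZZ/n\ZZ$ a character is determined by the image of a generator, necessarily an $n$-th root of unity, and conversely every such root yields a character; hence $|X(\ZZ/n\ZZ)|=n$. Since $X(A\times B)\cong X(A)\times X(B)$ canonically, the structure theorem gives $|X(A)|=|A|$, and the same coordinatewise description shows that characters separate the points of $A$. The Peter--Weyl assertion is then immediate: orthogonality makes the $|A|$ characters pairwise orthogonal, hence linearly independent in the $|A|$-dimensional space $\CC^A$, so after scaling by $|A|^{-1/2}$ they form an orthonormal basis for $(\,\cdot\,|\,\cdot\,)$. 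Pontryagin duality follows because the natural map $A\to X(X(A))$ is injective by separation of points and links two groups of the same order $|A|$, hence is an isomorphism.

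The only step I expect to require genuine work is the annihilator mechanism. Restriction of characters gives a homomorphism $X(A)\to X(\langle B\rangle)$ whose kernel is $B^\perp$ by the very definition of the annihilator, so the real content is surjectivity: every character of the subgroup $\langle B\rangle$ must extend to $A$. This is exactly where divisibility of $\Sphere^1$ enters, since a divisible abelian group is an injective $\ZZ$-module, and so any homomorphism $\langle B\rangle\to\Sphere^1$ extends along the inclusion $\langle B\rangle\hookrightarrow A$. Surjectivity then yields the claimed isomorphism $X(A)/B^\perp\xrightarrow{\cong}X(\langle B\rangle)$, while the remaining statements---that $X(A)$ is an abelian group under pointwise multiplication with inverse given by complex conjugation---are purely formal.
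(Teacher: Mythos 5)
Your proposal is correct in substance, and on the one point the paper actually argues inline it coincides with the paper's reasoning: your key lemma, proved by the reindexing identity $\chi(h_0)s=s$, is precisely the paper's observation that the sum in (\ref{summation}) is a fixed point for the action of $Y$ on $\CC$ through the character $a$, combined with the remark that a nontrivial element of $\Sphere^1$ fixes only $0\in\CC$; the orthogonality relation then follows as the special case $H=A$, $\chi=\alpha\beta$ in both treatments. Where you genuinely diverge is in scope: the paper treats the Peter--Weyl theorem, Pontrjagin duality and the annihilator mechanism as standard facts and cites Hofmann--Morris, remarking only that for finite abelian groups they are ``readily proved by simple means''; you supply those means, using the structure theorem to get $|X(\ZZ/n\ZZ)|=n$, hence $|X(A)|=|A|$ and separation of points, a dimension count in $\CC^A$ for spanning, an order-plus-injectivity argument for $A\cong X(X(A))$, and injectivity (divisibility) of $\Sphere^1$ as a $\ZZ$-module for the surjectivity of the restriction map $X(A)\to X(\langle B\rangle)$. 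That is a legitimate, self-contained route, and the injective-module argument is exactly the right tool for the extension step. One small correction: no rescaling by $|A|^{-1/2}$ is needed, or wanted. Since the inner product $(\phi|\psi)=\frac{1}{|A|}\sum_{a}\bar\phi(a)\psi(a)$ already carries the factor $\frac{1}{|A|}$, each character satisfies $(\alpha|\alpha)=1$, so the characters themselves are orthonormal; multiplying by $|A|^{-1/2}$ would produce vectors of norm $|A|^{-1/2}$ and destroy orthonormality. The scaling you have in mind is appropriate only for the unnormalized hermitian form $\sum_a\bar\phi(a)\psi(a)$.
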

We apply this to finite fields.
Let $\FF_q$ denote the finite field of order $q$, for some prime power $q$. A \emph{multiplicative character} is a character for the cyclic group $\FF_q^\times$.
We extend the multiplicative characters to the monoid $(\FF_q,\cdot)$ by putting $\alpha(0)=0$.
\footnote{Most textbooks on elementary number theory 
define $\eps(0)=1$ for the trivial character $\eps$. However, this convention kills the group
structure on $X(\FF_q^\times)$, so we will not use it.}
\begin{Lem}\label{Convoluted}
Every real solution $\phi$ on the additive group $(\FF_q,+)$ to (\ref{Hopf}) can be written
as a complex linear combination of nontrivial multiplicative characters
\[
 \phi=\sum_{\alpha\neq \eps}c_\alpha\alpha,
\]
for complex coefficients $c_\alpha$, with $\bar c_\alpha=c_{\bar\alpha}$.
Conversely, any such linear combination satisfies the first two equations in (\ref{Hopf}).
\end{Lem}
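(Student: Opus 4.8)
The plan is to use that the multiplicative characters form an orthonormal basis adapted to the functions vanishing at $0$, and then to read off both directions by a single coefficient computation together with the orthogonality relations recorded in the Facts. Concretely, Peter--Weyl for the finite abelian group $\FF_q^\times$ says that the $q-1$ multiplicative characters form an orthonormal basis of $R(\FF_q^\times,\CC)$ with respect to $(\phi|\psi)=\frac1{q-1}\sum_{g\in\FF_q^\times}\bar\phi(g)\psi(g)$; extending each by $\alpha(0)=0$, their span is exactly the hyperplane of functions on $\FF_q$ that vanish at $0$. I will also use the summation formula $\sum_{g\in\FF_q}\alpha(g)=0$ for every $\alpha\neq\eps$, which is the case $\beta=\eps$ of the character pairing in the Facts (the term $\alpha(0)=0$ being irrelevant).

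For the forward direction, let $\phi\in R(\FF_q,\RR)$ be a solution of (\ref{Hopf}). The equation $\epsilon(\phi)=0$ says $\phi(0)=0$, so $\phi$ lies in the span above and can be written $\phi=\sum_\alpha c_\alpha\alpha$ with $c_\alpha=(\alpha|\phi)$. I then isolate the trivial character: $c_\eps=\frac1{q-1}\sum_{g\in\FF_q^\times}\phi(g)=\frac1{q-1}\langle\phi,1\rangle=0$, using $\phi(0)=0$ together with the equation $\langle\phi,1\rangle=0$; hence only nontrivial characters occur. For the coefficient condition, I transport the reality $\bar\phi=\phi$ to the coefficients: since $\bar\alpha$ is again a multiplicative character (the inverse of $\alpha$), conjugating the expansion, reindexing, and comparing coefficients under the linear independence of the characters yields $\bar c_\alpha=c_{\bar\alpha}$.

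For the converse I would run the same computation backwards. Given $\phi=\sum_{\alpha\neq\eps}c_\alpha\alpha$ with $\bar c_\alpha=c_{\bar\alpha}$, conjugating and reindexing by $\beta=\bar\alpha$ gives $\bar\phi=\phi$, so $\phi$ is real-valued; the value $\phi(0)=\sum_\alpha c_\alpha\alpha(0)=0$ gives $\epsilon(\phi)=0$; and $\langle\phi,1\rangle=\sum_\alpha c_\alpha\sum_{g\in\FF_q}\alpha(g)=0$ by the summation formula, since each $\alpha\neq\eps$. This establishes that $\phi$ is real and satisfies the linear normalization conditions $\epsilon(\phi)=0$ and $\langle\phi,1\rangle=0$ of (\ref{Hopf}).

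There is no serious obstacle here: everything reduces to the orthonormality and the summation identity for the characters of $\FF_q^\times$, and the only point requiring care is the conjugate-character bookkeeping used for the reality condition. It is worth flagging, for the analysis that follows, that the symmetry equation $\phi=\phi^\sigma$, i.e.\ $\phi(-g)=\phi(g)$, is not automatic for such combinations and must be imposed separately: since $\alpha(-g)=\alpha(-1)\alpha(g)$ with $\alpha(-1)=\pm1$, evenness amounts to requiring $c_\alpha=0$ whenever $\alpha(-1)=-1$, and it is this residual constraint together with the quadratic equation that will single out the actual solutions.
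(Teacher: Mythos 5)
Your proof is correct and takes essentially the same route as the paper's: Peter--Weyl expansion of the functions vanishing at $0$, the identity $\langle 1,\phi\rangle=(q-1)(\eps|\phi)=(q-1)c_\eps$ to kill the trivial character, and conjugation plus reindexing to convert reality of $\phi$ into the coefficient condition $\bar c_\alpha=c_{\bar\alpha}$. Your closing caveat is also correct and worth keeping: the symmetry $\phi=\phi^\sigma$ is genuinely not automatic --- for instance $\phi=\alpha+\bar\alpha$ with $\alpha$ of order $4$ on $\FF_5$ satisfies $\bar c_\alpha=c_{\bar\alpha}$ but $\phi(-1)=-\phi(1)$ since $\alpha(-1)=-1$ --- so the converse clause of the lemma can only be read as asserting $\epsilon(\phi)=0$ and $\langle\phi,1\rangle=0$ (which is exactly what your argument and the paper's own proof establish), not literally the first two equations of (\ref{Hopf}); this reading is consistent with the paper itself verifying $\phi(-x)=\phi(x)$ separately, via $\alpha(-1)=1$ or $\alpha^2(-1)=1$, in each of its character examples.
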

\begin{proof}
We observed above that by the Peter-Weyl Theorem
every complex-valued function $\phi$ on $\FF_q$ which vanishes at $0$ can be written as a complex
linear combination of multiplicative characters, $\phi=\sum_{\alpha\in X(\FF_q^\times)}c_\alpha\alpha$.
For the constant function $1$ on $\FF_q$ we have
\[\langle 1,\phi\rangle=(q-1)(\eps|\phi)=(q-1)c_\eps,\]
hence $\langle1,\phi\rangle=0$ holds if and only if $c_\eps=0$.
The function $\phi$ is real if and only if $\phi=\bar\phi$.
\end{proof}
Now we need to evaluate convolutions of multiplicative characters $\alpha,\beta$.
For $\alpha,\beta\neq\eps$ we have by (\ref{summation})
\[
 (\alpha*\beta)(0)=\sum_{x\in\FF_q}\alpha(x)\beta(-x)=\beta(-1)\sum_{x\neq0}(\alpha\beta)(x)
 =\begin{cases} \beta(-1)(q-1)& \text{ if }\alpha\beta=\eps\\ 0 & \text{ else.}\end{cases}
\]
If $a\in\FF_q^\times$, then 
\begin{align*}
 (\alpha *\beta)(a)&=\sum_{x\in\FF_q}\alpha(x)\beta(a-x)\\
 &=\sum_{t\in\FF_q}\alpha(at)\beta(a-at)\\
 &=\sum_{t\neq0,1}\alpha(t)\beta(1-t)\alpha(a)\beta(a)\\
 &=J'(\alpha,\beta)(\alpha\beta)(a),
\end{align*}
where
\[
 J'(\alpha,\beta)=\sum_{t\neq0,1}\alpha(t)\beta(1-t)
\]
is a modified \emph{Jacobi sum}.
\footnote{Our $J'$ agrees with the classical Jacobi sum $J$ if $\alpha,\beta\neq\eps$, cp.~\cite{BEW} \S2.5.}
We note that $J'(\alpha,\beta)=J'(\beta,\alpha)$.
Properties of Jacobi sums are a classical topic in number theory, see eg.~\cite{IR} Ch.~8 \S3 and Ch.~10 \S3
or \cite{BEW} Ch.~2.
\begin{Lem}\label{Jacobi1}
Let $\alpha$ be a nontrivial character in $X(\FF^\times_q)$.
Then
\[
 J'(\eps,\eps)=q-2,\quad J'(\eps,\alpha)=-1\quad\text{ and }\quad J'(\bar\alpha,\alpha)=-\alpha(-1).
\]
If $\bar\alpha\neq\alpha$, then
\[
 |J'(\alpha,\alpha)|^2=q.
\]
\end{Lem}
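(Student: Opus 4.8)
The plan is to dispatch the three explicit evaluations by elementary substitutions combined with the orthogonality relation \eqref{summation}, and to reserve genuine work for the quadratic identity, which is the only substantive point. For $J'(\eps,\eps)$ the summand equals $1$ at each of the $q-2$ admissible arguments $t\neq 0,1$, giving $q-2$. For $J'(\eps,\alpha)$ I would substitute $u=1-t$, which is a bijection of $\FF_q\setminus\{0,1\}$ onto itself, so the sum becomes $\sum_{u\neq 0,1}\alpha(u)=\big(\sum_{u\neq 0}\alpha(u)\big)-\alpha(1)$; the first term vanishes by \eqref{summation} since $\alpha\neq\eps$, and $\alpha(1)=1$, yielding $-1$. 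For $J'(\bar\alpha,\alpha)$ I would rewrite $\bar\alpha(t)\alpha(1-t)=\alpha\big((1-t)/t\big)$ and set $u=1/t-1$; since then $t=1/(u+1)$, this is a bijection of $\FF_q\setminus\{0,1\}$ onto $\FF_q\setminus\{-1,0\}$, so the sum equals $\sum_{u\neq -1,0}\alpha(u)=-\alpha(-1)$, once more by \eqref{summation}.

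For the final claim I would introduce Gauss sums. Fix a nontrivial additive character $\psi$ of $(\FF_q,+)$ and set $g(\chi)=\sum_{x\in\FF_q}\chi(x)\psi(x)$ for a multiplicative character $\chi$. I would then establish two facts by the same orthogonality techniques. First, $|g(\chi)|^2=q$ for every nontrivial $\chi$: expanding $g(\chi)\overline{g(\chi)}=\sum_{x,y\neq 0}\chi(x/y)\psi(x-y)$, substituting $x=yu$, and summing over $y$ using orthogonality of $\psi$ leaves a residual multiplicative sum that is evaluated by \eqref{summation}. Second, the Gauss--Jacobi product formula $J(\chi_1,\chi_2)\,g(\chi_1\chi_2)=g(\chi_1)g(\chi_2)$, valid whenever $\chi_1,\chi_2$ and $\chi_1\chi_2$ are all nontrivial: expand $g(\chi_1)g(\chi_2)$ and group the terms according to the value of $x+y$, where the diagonal contribution $x+y=0$ vanishes precisely because $\chi_1\chi_2\neq\eps$, while the off-diagonal part factors as $g(\chi_1\chi_2)\,J(\chi_1,\chi_2)$.

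Applying the product formula with $\chi_1=\chi_2=\alpha$ requires $\alpha^2\neq\eps$, which is exactly the hypothesis $\bar\alpha\neq\alpha$, since $\alpha^2=\eps$ holds if and only if $\alpha=\alpha^{-1}=\bar\alpha$. Then $J(\alpha,\alpha)=g(\alpha)^2/g(\alpha^2)$, and taking $|\cdot|^2$ gives $q^2/q=q$; as $\alpha\neq\eps$, the footnote identifies $J'(\alpha,\alpha)$ with the classical sum $J(\alpha,\alpha)$, whence $|J'(\alpha,\alpha)|^2=q$. I expect the Gauss sum step to be the only real obstacle: the three explicit values are pure substitution plus \eqref{summation}, whereas both halves of the quadratic identity---the modulus $|g(\chi)|^2=q$ and the product formula---require the simultaneous use of additive and multiplicative orthogonality together with careful bookkeeping of the degenerate terms (the diagonal $x+y=0$ and the excluded arguments $t=0,1$).
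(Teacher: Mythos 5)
Your proposal is correct, and for the three explicit evaluations it coincides with the paper's proof (the paper calls the first two immediate from the definition and (\ref{summation}), and uses exactly your substitution $t\mapsto(1-t)/t$ for $J'(\bar\alpha,\alpha)$). For the main claim $|J'(\alpha,\alpha)|^2=q$, however, your route is genuinely different. The paper never leaves the multiplicative theory: it expands the product $J'(\alpha,\alpha)J'(\bar\alpha,\bar\alpha)=\sum_{x,y\neq0,1}\alpha(x/y)\,\alpha\bigl((1-x)/(1-y)\bigr)$, splits off the diagonal $x=y$ (which contributes $q-2$), observes that $(x,y)\mapsto\bigl(x/y,(1-x)/(1-y)\bigr)$ is a bijection of the off-diagonal pairs onto $\{(u,v)\mid u,v\neq0,1,\ u\neq v\}$, and evaluates the resulting sums by (\ref{summation}); the hypothesis $\bar\alpha\neq\alpha$ enters there as $\sum_{u\neq0}\alpha^2(u)=0$, and the identity $\overline{J'(\alpha,\alpha)}=J'(\bar\alpha,\bar\alpha)$ converts the product into the squared modulus. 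You instead pass to additive characters and Gauss sums, proving $|g(\chi)|^2=q$ and the product formula $g(\chi_1)g(\chi_2)=J(\chi_1,\chi_2)\,g(\chi_1\chi_2)$; your bookkeeping of the degenerate cases is right, and the hypothesis $\bar\alpha\neq\alpha$ is used exactly where it must be (nontriviality of $\alpha^2$, hence of $g(\alpha^2)$, and vanishing of the diagonal $x+y=0$), with the footnote's identification $J'=J$ applying since $\alpha\neq\eps$. Your approach is the classical one (essentially that of Ireland--Rosen, which the paper cites) and yields more: the exact relation $J(\alpha,\alpha)=g(\alpha)^2/g(\alpha^2)$, and $|J(\chi_1,\chi_2)|^2=q$ in the full generality $\chi_1,\chi_2,\chi_1\chi_2\neq\eps$. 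The paper's computation buys economy and self-containedness: no additive characters are needed, only the already-established relation (\ref{summation}), at the price of checking the bijectivity of one change of variables. One point you should make explicit: the additive orthogonality $\sum_{y\in\FF_q}\psi\bigl(y(u-1)\bigr)=0$ for $u\neq1$ is not literally an instance of (\ref{summation}) for multiplicative characters, but it follows from the same Facts applied to the finite abelian group $(\FF_q,+)$.
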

\begin{proof}
The first two equations are immediate from the definition and (\ref{summation}).
If $\alpha\neq\eps$, then we have by (\ref{summation})
\[
 J'(\bar\alpha,\alpha)=\sum_{t\neq0,1}\alpha((1-t)/t)=\sum_{y\neq0,-1}\alpha(y)=-\alpha(-1).
\]
If $\alpha\neq\bar\alpha$, then 
\begin{align*}
 J'(\alpha,\alpha)J'(\bar\alpha,\bar\alpha) & =
 \sum_{x,y\neq0,1}\alpha(x)\alpha(1-x)\bar\alpha(y)\bar\alpha(1-y) \\
 &=\sum_{x,y\neq0,1}\alpha({\textstyle\frac{x}{y}})\alpha({\textstyle\frac{1-x}{1-y}})\\
 &=\sum_{x,y,x/y\neq0,1}\alpha({\textstyle\frac{x}{y}})\alpha({\textstyle\frac{1-x}{1-y}})+(q-2)\\
 &=\sum_{u,v,u/v\neq0,1}\alpha(u)\alpha(v)+(q-2)\\
 &=\sum_{u,v\neq0}\alpha(u)\alpha(v)-\sum_{u\neq0}\alpha^2(u)-2\sum_{u\neq0,1}\alpha(u)+(q-2)\\
 &=0-0-2(-1)+(q-2)\\
 &=q.
\end{align*}
where we made again use of (\ref{summation}). Now we observe that $\overline{J'(\alpha,\beta)}=J'(\bar\alpha,\bar\beta)$.
\end{proof}

\begin{Ex}[Quartic characters] 
Suppose that $q\equiv1\pmod 4$ is a prime power.
Let $\alpha\in X(\FF_q^\times)$ denote a multiplicative character of order $4$.
Then \[\phi=\alpha^2\] solves (\ref{Hopf}) on the abelian group $(\FF_q,+)$.
Indeed, $\alpha^2=\bar\alpha^2$, $\alpha^2(0)=0$,
$\alpha^2(-1)=1$ and for $a\neq0$ we have 
\[
 (\phi^2+\phi*\phi)(a)=(1+J'(\alpha^2,\alpha^2))\alpha^4(a).
\]
But $J'(\alpha^2,\alpha^2)=J'(\bar\alpha^2,\alpha^2)=-\alpha^2(-1)=-1$
and hence $(\phi^2+\phi*\phi)(a)=0$. This solution to (\ref{Hopf}), with 
\[
 n=q,\quad\theta=\hat\theta=0
\]
corresponds to the Paley graph, as described in Example~\ref{Palyegraph}.
\end{Ex}
\begin{Ex}[Octic characters] 
Suppose that $q\equiv1\pmod 8$ is a prime power.
Let $\alpha\in X(\FF_q^\times)$ denote a multiplicative character of order $8$,
let $c$ be a complex number of modulus $|c|=1$ and consider the function
\[\phi=c\alpha^2+\bar c\bar\alpha^2.\]
Then $\bar\phi=\phi$ and $\phi(0)=0$.
Moreover, $\alpha^2(-1)=1$ and thus $\phi(-x)=\phi(x)$.
We note that $\alpha^4=\bar\alpha^4$ is real.
For $a\neq0$ we have
\begin{align*}
 (\phi^2+\phi*\phi)(a) & =
 c^2(1+J'(\alpha^2,\alpha^2))\alpha^4(a)+\bar c^2(1+J'(\bar\alpha^2,\bar\alpha^2))\bar\alpha^4(a)
 +2(1+J(\alpha^2,\bar\alpha^2)) \\
 & = c^2(1+J'(\alpha^2,\alpha^2))\bar\alpha^4(a)+\bar c^2(1+J'(\bar\alpha^2,\bar\alpha^2))\alpha^4(a)\\
 & = 2\mathrm{Re}(c^2(1+J'(\alpha^2,\alpha^2)))\alpha^4(a).
\end{align*}
If we choose $c$ in such a way that $c^2(1+J'(\alpha^2,\alpha^2))$
has real part $0$, then $(\phi^2+\phi*\phi)(a)=0$.
Hence we obtain a solution with 
\[
n=q,\quad
 \theta=\hat\theta=0.
\]
If $q=p^{2s}$ for a prime $p\equiv3\pmod4$, this construction yields strongly regular graphs which were discovered by Peisert~\cite{Peisert}.
These graphs differ in general from the Paley graphs, although they have the same parameters $(n,r,\lambda,\mu)$.
Our general construction does not necessarily correspond to strongly regular graphs.
For example, if $q=73$, then $J(\alpha^2,\alpha^2)=3+8i$ and $\phi$ assumes four different values on $\FF_{73}^\times$.
\end{Ex}
\begin{Ex}[Cubic characters]\label{Cubic}
Suppose that $q\equiv1\pmod 3$ is a prime power.
Let $\alpha\in X(\FF_q^\times)$ denote a multiplicative character of order $3$,
let $c$ be a complex number of modulus $|c|=1$ and consider the function
\[
 \phi=c\alpha+\bar c\bar\alpha.
\]
We note that $\alpha(-1)=1$ since $\alpha$ has order~$3$.
Thus $\phi$ is real, $\phi(0)=0$, and $\phi(-x)=\phi(x)$.
For $a\neq0$ we obtain
\begin{align*}
 (\phi^2+\phi*\phi)(a) & =
 c^2(1+J'(\alpha,\alpha))\alpha^2(a)+\bar c^2(1+J'(\bar\alpha,\bar\alpha))\bar\alpha^2(a)
 +2(1+J(\alpha,\bar\alpha)) \\
 & = c^2(1+J'(\alpha,\alpha))\bar\alpha(a)+\bar c^2(1+J'(\bar\alpha,\bar\alpha))\alpha(a).
\end{align*}
and
\[
 (\phi^2+\phi*\phi)(0) = c^2(\alpha*\alpha)(0) + \bar c^2(\bar\alpha*\bar\alpha)(0)+2(\bar\alpha*\alpha)=2(q-1).
\]
For a solution to (\ref{Hopf}), we have necessarily $\theta=c^3(1+J'(\alpha,\alpha))\in\RR$
and this equation can be solved for $c$.
Then 
\[
 n=q,\quad\hat\theta=\frac{|1+J'(\alpha,\alpha)|}{\sqrt{2q(q-1)}},
\]
where $\frac{\sqrt{q}-1}{\sqrt{2q(q-1)}}\leq\hat\theta\leq \frac{\sqrt{q}+1}{\sqrt{2q(q-1)}}$.
%
\end{Ex}

\section{Concluding remarks}
We have seen that there are nontrivial solutions to the equations (\ref{Basic})
in all dimensions $n\geq 4$.
One series of solutions arises by Theorem~\ref{MainThm1} from strongly regular graphs on $n$ vertices.
These include Paley graphs, where $n=q\equiv1\pmod 4$ is a prime power, see Example~\ref{Palyegraph},
and disjoint unions of $m\geq 2$ complete graphs on $\ell\geq2$ vertices,
with $n=m\ell$, see Example~\ref{CompleteGraph}. From the Riemannian viewpoint, these correspond to products of round spheres.
The primitive strongly regular graphs, however, yield new and nontrivial solutions.

Another series of solutions arises from group rings of abelian groups $G$, with $n=|G|$.
If $G=L\times M$ is a product of two nontrivial abelian groups, then there are nonzero solutions, see Example~\ref{Composite}.
Also, there are solutions if $G$ is the additive group of a finite field
$\FF_q$, provided that $q\equiv1\pmod m$, where $m=3,4,8$, see Section~\ref{MultCharSection}.
Peter M\"uller has kindly pointed out that there are also solutions for $m=10$.
Numerical experiments with \textsf{SageMath} suggest the following conjecture.
\begin{Conj}
Let $q\geq 4$ be a prime power. Then 
the equations (\ref{Hopf}) in Theorem~\ref{MainThm2} have real solutions $\phi\neq0$,
where $G$ is the additive group of the finite field $\FF_q$ and $n=q$.
\end{Conj}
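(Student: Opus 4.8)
The plan is to prove the conjecture by a soft variational argument rather than by exhibiting explicit characters, so that all prime powers are treated uniformly. First I would pin down the shape of a real solution. By Lemma~\ref{Convoluted} any real $\phi$ vanishing at $0$ with $\langle\phi,1\rangle=0$ is a real combination of nontrivial multiplicative characters, and the symmetry condition $\phi=\phi^\sigma$ cuts this down drastically: since $\alpha^\sigma=\alpha(-1)\alpha$, the relation $\phi=\phi^\sigma$ forces $c_\alpha=0$ whenever $\alpha(-1)=-1$. Thus a solution is supported on the \emph{even} characters (those with $\alpha(-1)=1$). Let $W\subseteq\RR^{\FF_q}$ be the real span of the nontrivial even characters; its elements automatically satisfy the first three equations of (\ref{Hopf}). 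I would then encode the remaining quadratic equation variationally. Writing $B(\phi,\psi)=\phi\odot\psi+\phi*\psi$ for the commutative combined product, the associated trilinear form
\[
 T(\phi,\psi,\chi)=\sum_{a}\phi(a)\psi(a)\chi(a)+\sum_{x,y}\phi(x)\psi(y)\chi(x+y)
\]
is symmetric in all three arguments and satisfies $T(\phi,\psi,\chi)=\langle B(\phi,\psi),\chi\rangle$.

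The first key step is to verify that $W$ is invariant under $\phi\mapsto B(\phi,\phi)$ off the origin. Using Lemma~\ref{Jacobi1}, the restriction of $B(\alpha,\beta)$ to $\FF_q^\times$ equals $(1+J'(\alpha,\beta))\,\alpha\beta$; the only component that could leave $W$ is the trivial one arising from $\beta=\bar\alpha$, and it carries the factor $1+J'(\alpha,\bar\alpha)=1-\alpha(-1)$, which vanishes exactly because $\alpha$ is even. Hence for $\phi\in W$ the function $B(\phi,\phi)$ restricted to $\FF_q^\times$ is again a real combination of nontrivial even characters, so it lies in $W$, while $B(\phi,\phi)(0)=\|\phi\|_2^2$ accounts precisely for the $\|\phi\|_2^2\delta_e$ term. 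Consequently the cubic $C(\phi)=\tfrac13 T(\phi,\phi,\phi)$ on $W$ has $W$-gradient $B(\phi,\phi)$, and a Lagrange-multiplier analysis on the unit sphere $\{\phi\in W:\|\phi\|_2=1\}$ yields exactly $B(\phi,\phi)=\theta\phi+\|\phi\|_2^2\delta_e$ with $\theta=3C(\phi)$. Since the sphere is compact, $C$ attains a maximum; because $C$ is odd under $\phi\mapsto-\phi$, this maximum is strictly positive as soon as $C\not\equiv0$, and the maximizer is then a nonzero real solution of (\ref{Hopf}) with $\theta>0$.

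The whole problem now reduces to showing that $C$ does not vanish identically on $W$. A direct computation in the character basis gives, for nontrivial even characters, $T(\alpha,\beta,\gamma)=(q-1)(1+J'(\alpha,\beta))$ when $\alpha\beta\gamma=\eps$ and $0$ otherwise. To produce a nonzero value it therefore suffices to find a single even character $\alpha$ of order at least $3$: taking $\gamma=\overline{\alpha^2}$ (nontrivial, since $\alpha^2\neq\eps$) we get $T(\alpha,\alpha,\gamma)=(q-1)(1+J'(\alpha,\alpha))$, and since $\alpha\neq\bar\alpha$ Lemma~\ref{Jacobi1} gives $|J'(\alpha,\alpha)|=\sqrt q>1$, so $1+J'(\alpha,\alpha)\neq0$. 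As $\FF_q^\times$ is cyclic, its group of even characters is cyclic of order $(q-1)/\gcd(2,q-1)$, which contains an element of order $\geq 3$ precisely when that order is at least $3$; this holds for every prime power $q\geq4$ except $q=5$.

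The hard part, then, is not the analysis but this lone degenerate case. For $q=5$ the only nontrivial even character is the quadratic character $\eta$, so $W$ is one-dimensional and $C\equiv0$, and the variational machine produces nothing. I would dispatch it by hand: $\phi=\eta$ is itself a solution with $\theta=0$, since on $\FF_5^\times$ one has $\eta^2=\eps$ and, by $J'(\eta,\eta)=-\eta(-1)=-1$, also $\eta*\eta=-\eps$, whence $\phi^2+\phi*\phi=0$ there, while the value at $0$ matches $\|\phi\|_2^2\delta_e$; this is exactly the Paley-graph solution of Example~\ref{Palyegraph}. Combining the variational existence for $q\neq5$ with this explicit solution for $q=5$ settles the conjecture for all $q\geq4$. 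I expect the only genuine care to lie in the invariance of $W$ and the bookkeeping of the trivial-character component, everything else being soft; the reward is a uniform, if non-constructive, existence proof that moreover yields $\theta\neq0$ for every $q\neq5$.
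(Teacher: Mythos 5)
The paper does not prove this statement at all: it is an open conjecture, supported only by \textsf{SageMath} experiments, and what the paper actually proves in Section~\ref{MultCharSection} covers only $q\equiv1\pmod m$ with $m=3,4,8$, by explicitly solving for a unimodular coefficient $c$ in an ansatz such as $\phi=c\alpha+\bar c\bar\alpha$. Your proposal is therefore an attack on the open problem itself, and its route --- a compactness/Lagrange argument on the subspace spanned by even characters --- appears nowhere in the paper. I have checked the key steps and they hold. The invariance of $W$ rests on $1+J'(\alpha,\bar\alpha)=1-\alpha(-1)=0$ for even $\alpha$ (Lemma~\ref{Jacobi1} together with $J'(\alpha,\beta)=J'(\beta,\alpha)$), plus $(\phi*\phi)(0)=\sum_x\phi(x)\phi(-x)=||\phi||_2^2$ for real even $\phi$; since moreover $\delta_e\perp W$, a critical point of $C$ on the unit sphere of $W$ satisfies exactly the fourth equation of (\ref{Hopf}), while the first three hold identically on $W$. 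The nonvanishing criterion $T(\alpha,\alpha,\bar\alpha^2)=(q-1)(1+J'(\alpha,\alpha))$ with $|J'(\alpha,\alpha)|=\sqrt q\neq1$ for an even $\alpha$ of order at least $3$, the count showing that $q=5$ is the only prime power $\geq 4$ admitting no such character, and the quadratic-character solution for $q=5$ (the Paley solution of Example~\ref{Palyegraph}) are all correct; as a consistency check, for $q\equiv1\pmod 3$ your maximizer reproduces exactly the paper's Example~\ref{Cubic}.

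Three points must be repaired before this can be called a proof. First, $W$ is not the ``real span of the nontrivial even characters'' --- that span contains non-real functions; define $W$ as the set of \emph{real-valued} functions in the complex span, equivalently the real span of the functions $\alpha+\bar\alpha$ and $i(\alpha-\bar\alpha)$. Second, and more seriously, your blanket claim that $T$ is symmetric in all three arguments is false for arbitrary functions: $\sum_{x,y}\phi(x)\psi(y)\chi(x+y)$ is not invariant under $\psi\leftrightarrow\chi$ (test it on delta functions). It \emph{is} symmetric when all three arguments are even, because the sum can then be rewritten as $\sum_{x+y+z=0}\phi(x)\psi(y)\chi(z)$; this evenness hypothesis must be stated and used, since the symmetry is precisely what makes the $W$-gradient of $C$ equal to $B(\phi,\phi)$ --- without it the critical-point equation is not (\ref{Hopf}). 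As $W$ consists of even functions the repair is one line, but it is load-bearing. Third, the inference from $T(\alpha,\alpha,\bar\alpha^2)\neq0$, a value on complex characters, to $C\not\equiv0$ on the real space $W$ should be written out: complexify $W$, note $T$ vanishes on $W_{\CC}^{\times 3}$ if it vanishes on $W^{\times3}$, and invoke polarization of the symmetric trilinear form. With these repairs I can find no gap; since the argument would settle a conjecture the authors left open (and yields the extra information $\theta>0$ for every $q\neq5$), it deserves an especially careful independent verification before being asserted as a theorem.
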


\subsection*{Acknowledgement}
We thank Peter M\"uller for interesting remarks on Section~\ref{MultCharSection}, and Christoph B\"ohm for 
many patient explanations about the $\#$-map and the Ricci flow.
Hans Cuypers and Ferdinand Ihringer suggested that certain association schemes might also lead to solutions.

\end{document}